\pdfoutput=1
\RequirePackage{ifpdf}
\ifpdf 
\documentclass[pdftex]{sigma}
\else
\documentclass{sigma}
\fi

\numberwithin{equation}{section}

\newtheorem{Theorem}{Theorem}[section]

\newtheorem{Lemma}[Theorem]{Lemma}
\newtheorem{Proposition}[Theorem]{Proposition}
\newtheorem{Question}[Theorem]{Question}
\newtheorem{Conjecture}[Theorem]{Conjecture}
 { \theoremstyle{definition}

\newtheorem{Remark}[Theorem]{Remark} }

\usepackage{mathabx}
\usepackage[all]{xy}
\usepackage{pgf,tikz}
\usetikzlibrary{arrows}
\usetikzlibrary{decorations.pathmorphing}



\def \d {{\partial}} 












\def \d {\mathfrak{d}}



\newcommand{\A}{\mathcal{A}}
\newcommand{\U}{\mathcal{U}}

\newcommand{\x}{\mathbf{x}}
\newcommand{\y}{\mathbf{y}}

\newcommand{\bbZ}{\mathbb{Z}}
\newcommand{\bbP}{\mathbb{P}}
\newcommand{\bbA}{\mathbb{A}}
\renewcommand{\d}{\dagger}

\begin{document}

\allowdisplaybreaks

\newcommand{\arXivNumber}{1807.03359}

\renewcommand{\thefootnote}{}

\renewcommand{\PaperNumber}{049}

\FirstPageHeading

\ShortArticleName{Reddening Sequences for Banff Quivers and the Class $\mathcal{P}$}

\ArticleName{Reddening Sequences for Banff Quivers\\ and the Class $\boldsymbol{\mathcal{P}}$\footnote{This paper is a~contribution to the Special Issue on Cluster Algebras. The full collection is available at \href{https://www.emis.de/journals/SIGMA/cluster-algebras.html}{https://www.emis.de/journals/SIGMA/cluster-algebras.html}}}

\Author{Eric BUCHER~$^\dag$ and John MACHACEK~$^\ddag$}

\AuthorNameForHeading{E.~Bucher and J.~Machacek}

\Address{$^\dag$~Department of Mathematics, Xavier University, Cincinnati, Ohio 45207, USA}
\EmailD{\href{mailto:buchere1@xavier.edu}{buchere1@xavier.edu}}

\Address{$^\ddag$~Department of Mathematics and Statistics, York University,\\
\hphantom{$^\ddag$}~Toronto, Ontario M3J 1P3, Canada}
\EmailD{\href{mailto:machacek@yorku.ca}{machacek@yorku.ca}}

\ArticleDates{Received June 15, 2019, in final form May 23, 2020; Published online June 08, 2020}

\Abstract{We show that a reddening sequence exists for any quiver which is Banff. Our proof is combinatorial and relies on the triangular extension construction for quivers. The other facts needed are that the existence of a reddening sequence is mutation invariant and passes to induced subquivers. Banff quivers define locally acyclic cluster algebras which are known to coincide with their upper cluster algebras.
The existence of reddening sequences for these quivers is consistent with a conjectural relationship between the existence of a~reddening sequence and a cluster algebra's equality with its upper cluster algebra.
Our result completes a~verification of the conjecture for Banff quivers. We also prove that a~certain subclass of quivers within the class $\mathcal{P}$ define locally acyclic cluster algebras.}

\Keywords{cluster algebras; quiver mutation; reddening sequences}

\Classification{13F60; 16G20}

\renewcommand{\thefootnote}{\arabic{footnote}}
\setcounter{footnote}{0}

\section{Introduction}

Cluster algebras are commutative algebras where generators can be explicitly described through a process known as quiver mutation.
It is natural to ask what influence the combinatorics of the quiver has on the algebra.
It has been observed that the existence of a maximal green or reddening sequence for a quiver seems to correspond to when the cluster algebra defined by the quiver equals its upper cluster algebra.
We provide further evidence for this relationship and aim to clarify why the two notions seem to coincide.
Our methods look at combinatorial properties of quiver and use them to either produce reddening sequences or show the cluster algebra equals its upper cluster algebra.
Moreover, we find that the same combinatorial construction, which is known as a triangular extension, is essential to our results on both reddening sequences and upper cluster algebras.

For a semifield $\mathbb{P}$ with group algebra $\mathbb{ZP}$ we choose some ground ring $\mathbb{A}$ inside the field of fractions of $\mathbb{ZP}$.
We will restrict our attention to skew-symmetric cluster algebras.
In~\cite{CAI} Fomin and Zelevinsky define a cluster algebra $\mathcal{A}$ as a certain $\mathbb{A}$-algebra inside an ambient field with generators determined by a quiver $Q$.
In~\cite{CAIII} Bernstein, Fomin, and Zelevinsky define the upper cluster algebra $\mathcal{U}$ which satisfies $\mathcal{A} \subseteq \mathcal{U}$.
A fundamental problem in cluster algebra theory is to determine when there is the equality $\mathcal{A} = \mathcal{U}$.
In general this can be a difficult problem.
Typically the ground ring of choice is $\bbA = \bbZ\bbP$.
However, there has been recent attention paid to the choice of ground ring in the $\A = \U$ question.
There can be a delicate dependence on the ground ring $\mathbb{A}$ as demonstrated by the authors with M. Shapiro~\cite{BMS}.
Goodearl and Yakimov have developed techniques for showing $\A = \U$ for ground rings $\bbA \subseteq \bbZ\bbP$~\cite{GY}.

In the case that ground ring is $\mathbb{A} = \mathbb{ZP}$, Muller's theory of cluster localization and locally acyclic cluster algebras provides a means of showing $\mathcal{A} = \mathcal{U}$~\cite{MullerADV, MullerSIGMA}.
The Banff algorithm presented by G. Muller, \cite{MullerADV}, is one way of showing that a cluster algebra is locally acyclic, and quivers for which the Banff algorithm produces a positive output are called Banff.
In particular if a quiver is Banff, then the cluster algebra it defines over $\mathbb{ZP}$ satisfies $\mathcal{A} = \mathcal{U}$.

Keller~\cite{Keller1,Keller2} introduced certain sequences of quiver mutations, which are now known as maximal green sequences and reddening sequences, as a combinatorial way to study Kontsevich and Soibelman's Donaldson-Thomas transformations~\cite{KS}.
The existence of both maximal green sequences and reddening sequences are important in cluster algebra theory and has been thought to be related to the equality of the cluster algebra and upper cluster algebra.\footnote{In~\cite{Mills} Mills states ``The conjecture that the cluster algebra and upper cluster algebra coincide if and only if a
maximal green sequence exists arose from a discussion led by Arkady Berenstein and Christof Geiss at the \emph{Hall and cluster algebras} conference at Centre de Recherches Math\'ematiques,
Montr\'eal in 2014''.}
Canakci, Lee, and Schiffler~\cite{CLS} had observed existence of a maximal green sequence coincided with $\mathcal{A} = \mathcal{U}$ in known cases at the time.
Mills~\cite{Mills} offers an explicit conjecture on the potential relationship of maximal green sequences, the equality $\mathcal{A} = \mathcal{U}$, locally acyclicity, and choice of ground ring.
Our work here is progress in understanding this relationship.

In Theorem~\ref{thm:Banff} we show that Banff quivers admit reddening sequences.
Theorem~\ref{thm:P} gives reddening sequences for any quiver in the class $\mathcal{P}$.
These theorems cover many examples which have been the subject of previous research.
Ford and Serhiyenko~\cite{FS} have shown that the quivers arising from Postnikov's reduced plabic graphs~\cite{Pos} admit reddening sequences.
Theorem~\ref{thm:Banff}, combined with the work of Muller and Speyer's~\cite{MS} which says such quivers are Banff, gives another proof of this fact.
Many quivers from marked surfaces are known to be Banff~\cite[Theorem 10.6]{MullerADV}~\cite[Proposition 12]{CLS}.
Thus, Theorem~\ref{thm:Banff} gives reddening sequences for these quivers.
For surfaces covered by these cases maximal green sequences have been previously constructed~\cite{Alim,Bucher,BucherMills,GS}.
Quivers associated to surfaces belong to an important class of quivers known as mutation finite quivers.
For mutation finite quivers there is a complete classification of the existence of reddening and maximal green sequences~\cite{MillsMGS}.
Also, minimal mutation infinite quivers are further examples of Banff quivers for which Lawson and Mills have shown have maximal green sequences~\cite{LawsonMills}.

Morally, the proposed conjecture (with some dependence on ground ring) is that for a given quiver the following are equivalent:
\begin{enumerate}\itemsep=0pt
\item[(i)] The quiver admits a reddening sequence.
\item[(ii)] The associated cluster algebra equals its upper cluster algebra.
\item[(iii)] The associated cluster algebra is locally acyclic.
\end{enumerate}

Mills \looseness=1 offers a more precise conjecture~\cite[Conjecture 2]{Mills} and verifies the conjecture for mutation finite quivers~\cite[Theorem 1.2]{Mills}.
Theorem~\ref{thm:Banff} completes a proof that all three conditions are equivalent for Banff quivers.
Theorem~\ref{thm:P} suggests the class $\mathcal{P}$ as a next step in verifying the conjecture.
To the knowledge of the authors every known Banff quiver is also in the class $\mathcal{P}$ (in fact inside a more restrictive class we denote $\mathcal{P}'$).
We ask Questions~\ref{q:BP} and~\ref{q:B'} in attempt to better understand the relationship between Banff quivers and the class $\mathcal{P}$. In Theorem~\ref{thm:locallyacyclic} we make some progress toward the equivalence of~(i),~(ii), and~(iii) for the class $\mathcal{P}$ by showing the conditions are equivalent for a certain subfamily of qui\-vers.

\section{Quiver mutation background}\label{sec:back}

In this section we will briefly establish some of the basic definitions that we will use to prove our main results on reddening sequences.
A \emph{quiver}, $Q$, is a directed graph whose edge set contains no loops or 2-cycles.
The \emph{framed quiver} associated to~$Q$, denoted $\widehat{Q}$, is the quiver whose vertex set and edge set are the following:
\begin{gather*} V\big(\widehat{Q} \big) := V(Q) \sqcup \{ i' \,|\,i \in V(Q) \},\\
 E\big(\widehat{Q} \big) : = E(Q) \sqcup \{ i \rightarrow i' \,|\,i \in V(Q) \}.
\end{gather*}
The \emph{coframed quiver} associated to $Q$, denoted $\widecheck{Q}$, is the quiver whose vertex set and edge set are the following:
\begin{gather*} V\big(\widecheck{Q} \big) := V(Q) \sqcup \{ i' \,|\,i \in V(Q) \},\\
 E\big(\widecheck{Q} \big) : = E(Q) \sqcup \{ i' \rightarrow i \,|\,i \in V(Q) \}.
\end{gather*}
The vertices $i \in V(Q)$ are the \emph{mutable vertices} and the vertices $i'$ are the \emph{frozen vertices}.
Mutation is not allowed at any frozen vertex.
The framed quiver corresponds to considering a~cluster algebra with principle coefficients.
For any mutable vertex~$i$, \emph{mutation} at the vertex~$i$ produces a new quiver denoted~$\mu_i(Q)$ obtained from~$Q$ by doing the following:
\begin{enumerate}\itemsep=0pt
\item[(1)] For each pair of arrows $k \to j$, $i \to k$ add an arrow $j \to k$.
\item[(2)] Reverse all arrows incident on~$i$.
\item[(3)] Delete a maximal collection of disjoint $2$-cycles and any arrows between two frozen vertices.
\end{enumerate}
Two quivers are said to be \emph{mutation equivalent} if one can be reached from the other by a~sequence of mutations.

Given any quiver $Q$ and $A \subseteq V(Q)$ we let $Q|_A$ denote the \emph{induced subquiver} which has
\begin{gather*}
V(Q|_A) = A,\\
E(Q|_A) = \big\{i \overset{\alpha}{\to} j \in E(Q)\colon i,j \in A\big\}
\end{gather*}
and is a natural restriction of $Q$ to $A$.
We will use $Q \setminus A$ to denote $Q|_{V(Q) \setminus A}$.

A mutable vertex is \emph{green} if there are no incident incoming arrows from frozen vertices.
Similarly, a mutable vertex is \emph{red} if there are no incident outgoing arrows to frozen vertices.
If we start with an initial quiver $Q$ and preform mutations at mutable vertices of the framed quiver $\widehat{Q}$, then any mutable vertex will always be either green or red.
The result is known as \emph{sign-coherence} and was established by Derksen, Weyman, and Zelevinsky~\cite{DWZ}.
Notice also that all vertices are initially green when starting with~$\widehat{Q}$.
Keller~\cite{Keller1, Keller2} has introduced the following types of sequences of mutations which will our main interest.
A sequence of mutations is called a \emph{reddening sequence} if after preforming this sequences of mutations all mutable vertices are red.
A \emph{maximal green sequence} is a~reddening sequence where each mutation occurs at a~green vertex.
By \cite[Proposition~2.10]{BDP}, after preforming a reddening sequence starting with~$\widehat{Q}$ the resulting quiver will be isomorphic to~$\widecheck{Q}$.
By definition all maximal green sequences are reddening sequences.
There are quivers for which a maximal green sequence does not exist, but a reddening sequence does~\cite{MullerEJC}.
Furthermore there are quivers for which no reddening sequence exists \cite{Seven}, green or otherwise.
If a reddening (maximal green) sequence exists for a quiver, we will say this quiver admits a reddening (maximal green) sequence.

Now we recall two results of Muller which will be needed for the proofs of our main results.
Both these results were first shown in~\cite{MullerEJC} and their proofs make use scattering diagrams which have been connected with cluster algebras by Gross, Hacking, Keel, and Kontsevich~\cite{GHKK}.
A~version of Lemma~\ref{lem:subquiver} also holds for maximal green sequences, but we will not need it.
However, Lemma~\ref{lem:mutation} is false for maximal green sequences.
The mutation invariance of reddening is essential to our results.

\begin{Lemma}[{\cite[Theorem 17]{MullerEJC}}]\label{lem:subquiver}
If a quiver $Q$ admits a reddening sequence, then any induced subquiver of $Q$ also admits a reddening sequence.
\end{Lemma}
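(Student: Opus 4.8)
The plan is to recast the existence of a reddening sequence as a property of the cluster scattering diagram of $Q$ and then to observe that passing to an induced subquiver corresponds to slicing that diagram by a coordinate subspace. Since every induced subquiver $Q|_A$ is obtained by deleting the vertices of $I := V(Q)\setminus A$ one at a time, and since ``being an induced subquiver of'' is transitive, it suffices to prove the statement when $A = V(Q)\setminus\{v\}$ and then induct on $|I|$. Write $n := |V(Q)|$, let $B$ be the exchange matrix of $Q$, and let $B_A$ be the principal submatrix indexed by $A$, which is the exchange matrix of $Q|_A$.

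First I would set up the scattering-diagram criterion. In $\mathbb{R}^n$ the matrix $B$ determines a cluster scattering diagram $\mathfrak{D}_B$ in the sense of Gross, Hacking, Keel, and Kontsevich~\cite{GHKK}; its full-dimensional chambers lying in the cluster complex are the $g$-vector cones of the seeds reachable from the initial one, the initial seed being the positive orthant $\mathcal{C}^+ := \mathbb{R}_{\ge 0}^n$. A mutation sequence is a reddening sequence exactly when it carries the initial seed to the seed whose $g$-vectors are $-e_1,\dots,-e_n$, i.e.\ to the negative orthant $\mathcal{C}^- := \mathbb{R}_{\le 0}^n$; this is the scattering-diagram shadow of the fact (\cite[Proposition~2.10]{BDP}) that a reddening sequence takes $\widehat{Q}$ to $\widecheck{Q}$. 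Consequently $Q$ admits a reddening sequence if and only if $\mathcal{C}^-$ occurs as a chamber of $\mathfrak{D}_B$, and likewise $Q|_A$ admits one if and only if $\mathbb{R}_{\le 0}^A$ occurs as a chamber of $\mathfrak{D}_{B_A}$.

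The crux is then a restriction lemma asserting that slicing $\mathfrak{D}_B$ by the coordinate hyperplane $\mathbb{R}^A = \{x \in \mathbb{R}^n : x_v = 0\}$ reproduces the scattering diagram of the subquiver: $\mathfrak{D}_{B_A} = \mathfrak{D}_B \cap \mathbb{R}^A$, where the wall-crossing automorphisms on the slice are those of $\mathfrak{D}_B$ after specializing the variable attached to $v$ to $1$. Granting this, suppose $Q$ admits a reddening sequence, so $\mathcal{C}^-$ is a chamber of $\mathfrak{D}_B$. Its slice $\mathcal{C}^- \cap \mathbb{R}^A = \mathbb{R}_{\le 0}^A$ is $(n-1)$-dimensional inside $\mathbb{R}^A$, and no wall of the slice can meet its interior, because each such wall lies inside a wall of $\mathfrak{D}_B$, and the walls of $\mathfrak{D}_B$ avoid the interior of the chamber $\mathcal{C}^-$. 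Hence $\mathbb{R}_{\le 0}^A$ is a chamber of $\mathfrak{D}_{B_A}$, so $Q|_A$ admits a reddening sequence; the induction on $|I|$ then finishes the proof.

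I expect the restriction lemma to be the main obstacle, and it is precisely the point at which one cannot simply reuse the mutation sequence of $Q$ verbatim on $Q|_A$. Establishing it requires showing that the slice of a consistent scattering diagram is again consistent and agrees wall-by-wall with the diagram built directly from $B_A$: one must track how each wall of $\mathfrak{D}_B$ either meets $\mathbb{R}^A$ transversally, is contained in it, or misses it, and then verify that the path-ordered products of wall-crossings match after setting the $v$-variable to $1$. Controlling the walls that happen to lie inside $\mathbb{R}^A$, together with the genericity needed to choose a representative path of wall-crossings that restricts correctly, is where the GHKK machinery invoked by Muller does the essential work.
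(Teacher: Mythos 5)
First, a point of comparison: this lemma is not proved in the paper at all --- it is imported verbatim from Muller \cite[Theorem~17]{MullerEJC}, with only the remark that the proof there relies on scattering diagrams in the sense of \cite{GHKK}. So your overall strategy (characterize the existence of a reddening sequence via the chamber structure of the cluster scattering diagram, then restrict that diagram to the coordinate subspace $\mathbb{R}^A$) is indeed the route taken in the cited source, and your reduction to deleting one vertex at a time is fine.

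However, as written your argument has a genuine gap, and it is not located where you think it is (the restriction lemma is real work, but it is available from \cite{MullerEJC} and \cite{GHKK}). The problem is the criterion you extract. Every wall of the cluster scattering diagram $\mathfrak{D}_B$ lies in $n^{\perp}$ for some $n$ in the nonnegative span of the basis vectors, so no wall ever meets the interior of the negative orthant: $\mathcal{C}^-$ is a chamber of $\mathfrak{D}_B$ for \emph{every} quiver, whether or not a reddening sequence exists. The correct characterization is that $Q$ admits a reddening sequence if and only if $\mathcal{C}^-$ is a chamber \emph{of the cluster complex}, i.e., is reachable from $\mathcal{C}^+$ by a finite sequence of wall-crossings corresponding to mutations. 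Your concluding step --- ``no wall of the slice meets the interior of $\mathbb{R}^A_{\le 0}$, hence it is a chamber of $\mathfrak{D}_{B_A}$'' --- therefore establishes a statement that is unconditionally true and never uses the hypothesis on $Q$, so it cannot yield the lemma. What actually has to be shown is that reachability of $\mathcal{C}^-$ in $\mathfrak{D}_B$ forces reachability of $\mathbb{R}^A_{\le 0}$ in $\mathfrak{D}_{B_A}$: one must take a finite transverse path from $\mathcal{C}^+$ to $\mathcal{C}^-$ passing only through cluster chambers of $\mathfrak{D}_B$ and use the restriction of the diagram to produce a finite path from $\mathbb{R}^A_{\ge 0}$ to $\mathbb{R}^A_{\le 0}$ through cluster chambers of $\mathfrak{D}_{B_A}$. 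That transfer of finiteness and reachability is the actual content of Muller's proof and is absent from yours.
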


\begin{Lemma}[{\cite[Corollary 19]{MullerEJC}}]\label{lem:mutation}
If a quiver $Q$ admits a reddening sequence, then any quiver mutation equivalent to $Q$ also admits a reddening sequence.
\end{Lemma}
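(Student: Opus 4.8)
Since mutation equivalence is generated by single mutations, it suffices to treat one step and then induct along a connecting sequence: I would prove that if $Q$ admits a reddening sequence and $k$ is a mutable vertex, then $Q' := \mu_k(Q)$ admits one as well. First I would reformulate the hypothesis through the framed and coframed quivers. By \cite[Proposition~2.10]{BDP}, $Q$ admits a reddening sequence precisely when some sequence of mutations at mutable vertices carries $\widehat{Q}$ to a quiver isomorphic to $\widecheck{Q}$, that is, to the configuration in which every mutable vertex is red. So the goal becomes: produce a path of mutations from $\widehat{Q'}$ to $\widecheck{Q'}$.

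The naive attempt, namely reusing the reddening sequence of $Q$ after a single mutation at $k$, does not work directly, and seeing why points toward the correct method. Mutating $\widehat{Q}$ at $k$ yields a quiver whose mutable part is indeed $Q' = \mu_k(Q)$, but whose frozen structure is not that of $\widehat{Q'}$: the arrow $k \to k'$ is reversed to $k' \to k$, and new arrows $j \to k'$ appear for each $j \to k$ in $Q$. In particular $k$ is red in $\mu_k(\widehat{Q})$ but green in $\widehat{Q'}$, so $\widehat{Q'}$ and $\mu_k(\widehat{Q})$ are genuinely different principal-coefficient seeds and the sequence for $Q$ cannot simply be transplanted. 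Equivalently, $c$-vectors are computed relative to the initial seed, and changing the initial seed by $\mu_k$ alters them by a piecewise-linear (tropical) transformation which does not send the ``all red'' target for $Q$ to the ``all red'' target for $Q'$.

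The plan, therefore, is to pass to an object that forgets the choice of initial seed in the right way: the cluster scattering diagram $\mathfrak{D}$ of \cite{GHKK}. The structural fact I would use is that $Q$ admits a reddening sequence if and only if the negative chamber $\mathcal{C}^-$ (the antipode $-\mathcal{C}^+$ of the initial positive chamber) lies in the cluster complex of $\mathfrak{D}_Q$, i.e.\ is reachable from $\mathcal{C}^+$ by finitely many wall crossings; this is exactly the condition that $\widecheck{Q}$ be mutation-reachable from $\widehat{Q}$. I would then invoke that $\mathfrak{D}_Q$ and $\mathfrak{D}_{Q'}$ are identified by the piecewise-linear tropical mutation map attached to $\mu_k$, which carries walls to walls and cluster chambers to cluster chambers, and transport the reddening path of $Q$ across this identification to obtain one for $Q'$.

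The hard part will be this last step: showing that the tropical mutation map sends the negative chamber of $\mathfrak{D}_Q$ to the negative chamber of $\mathfrak{D}_{Q'}$ while preserving membership in the cluster complex, so that reachability of $\mathcal{C}^-$ is genuinely transferred. This is precisely where scattering-diagram input, rather than purely combinatorial manipulation of $\widehat{Q}$, is essential, and it is also where the argument is special to reddening sequences: the analogue fails for maximal green sequences because the change of initial seed need not preserve the monotone ``mutate only at green vertices'' structure, even though it does preserve the endpoint being the negative chamber.
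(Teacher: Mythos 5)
This lemma is not proved in the paper at all: it is quoted verbatim from Muller's work (\cite[Corollary~19]{MullerEJC}), and the paper only remarks that Muller's proof goes through the scattering diagrams of \cite{GHKK}. Your outline --- reduce to a single mutation, characterize the existence of a reddening sequence as reachability of the all-negative chamber in the cluster complex, and transport that reachability across the piecewise-linear identification of the scattering diagrams for $Q$ and $\mu_k(Q)$ --- is exactly the route of that cited proof (and you correctly isolate the one step, preservation of the negative chamber and of the cluster complex under the tropical mutation map, that carries the real content), so there is nothing here that diverges from or falls short of the argument the paper relies on.
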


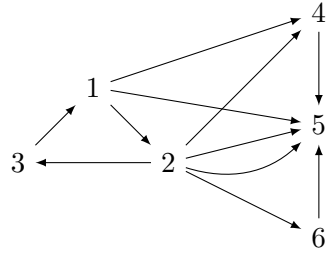
\begin{figure}[t]\centering
\begin{tikzpicture}
\node (1) at (0,0.5) {$1$};
\node (2) at (1,-0.5) {$2$};
\node (3) at (-1,-0.5) {$3$};
\draw[-{latex}] (1) to (2);
\draw[-{latex}] (2) to (3);
\draw[-{latex}] (3) to (1);
\node (4) at (3,1.5) {$4$};
\node (5) at (3,0) {$5$};
\node (6) at (3,-1.5) {$6$};
\draw[-{latex}] (4) to (5);
\draw[-{latex}] (6) to (5);
\draw[-{latex}] (1) to (4);
\draw[-{latex}] (1) to (5);
\draw[-{latex}] (2) to (4);
\draw[-{latex}] (2) to[bend right=30] (5);
\draw[-{latex}] (2) to (5);
\draw[-{latex}] (2) to (6);
\end{tikzpicture}
\caption{A triangular extension of quivers.}\label{fig:directsum}
\end{figure}

We now consider a result which states the triangular extension construction preserves the existence of reddening sequences and maximal green sequences.
Let $Q_1$ and $Q_2$ be quivers. A~\emph{triangular extension} of $Q_1$ and $Q_2$ is any quiver $Q$ with
\begin{gather*} V(Q) = V(Q_1) \sqcup V(Q_2),\\
 E(Q) = E(Q_1) \sqcup E(Q_2) \sqcup E,
\end{gather*}
where $E$ is any set of arrows such that either we have either
\[
\text{for any $i \to j \in E$ implies $i \in V(Q_1)$ and $j \in V(Q_2)$}
\]
or else
\[
\text{for any $i \to j \in E$ implies $i \in V(Q_2)$ and $j \in V(Q_1)$.}
\]
That is, a triangular extension of quivers simply takes the disjoint union of the two quivers then adds additional arrows between the quivers with the condition that all arrows are directed from one quiver to the other. An example of a triangular extension of quivers $Q_1$ and $Q_2$ where $V(Q_1) = \{1,2,3\}$ and $V(Q_2) = \{4,5,6\}$ is given in Fig.~\ref{fig:directsum}.

We shortly will state a result that the triangular extension construction preserves the existence of reddening sequences and maximal green sequences.
Garver and Musiker show an analogous result for maximal green sequences in a restricted case needed for their work which they call a~$t$-colored direct sum~\cite[Theorem 3.12]{GM} and suggest the result holds in greater generality~\cite[Remark~3.13]{GM}.
Cao and Li show the result (stated for maximal green sequences, but remark on reddening sequences~\cite[Remark 4.6]{CL}) for any triangular extension in the generality of skew-symmetrizable matrices~\cite[Theorem 4.5]{CL}.
In sections following this one we will emphasize the results for reddening sequences. The fact that the existence of reddening sequences is mutation invariant will allow applications of the following lemma to Banff quivers and quivers in class $\mathcal{P}$ since both these classes are defined up to mutation equivalence.

\begin{Lemma}[{\cite[Theorem 4.5, Remark 4.6]{CL}}]\label{lem:directsum}
If $Q_1$ and $Q_2$ are any two quivers which both admit reddening $($maximal green$)$ sequences, then any triangular extension of $Q_1$ and $Q_2$ admits a reddening $($maximal green$)$ sequence.
\end{Lemma}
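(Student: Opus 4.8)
The plan is to build a reddening sequence for the triangular extension $Q$ by concatenating reddening sequences of the two factors, applied to the ``source side'' first. Assume without loss of generality that every arrow of $E$ runs from $V(Q_1)$ to $V(Q_2)$ (the other case is symmetric). Let $\mathbf{s}_1$ and $\mathbf{s}_2$ be reddening sequences for $Q_1$ and $Q_2$, and let $\mathbf{s} = \mathbf{s}_1 \mathbf{s}_2$ be the sequence that first performs $\mathbf{s}_1$ at vertices of $Q_1$ and then $\mathbf{s}_2$ at vertices of $Q_2$, all inside the framed quiver $\widehat{Q}$. (For the maximal green statement we take each $\mathbf{s}_j$ to mutate only green vertices.) I would prove that $\mathbf{s}$ reddens $\widehat{Q}$.

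For the first phase I would track the quiver while $\mathbf{s}_1$ is applied, mutating only vertices of $Q_1$. Three bookkeeping facts drive the argument. First, the frozen vertices $k'$ with $k \in V(Q_2)$ are never touched: the only arrow ever incident to $k'$ is $k \to k'$, and $k$ is never mutated during $\mathbf{s}_1$, so the induced framed subquiver on $V(Q_2) \sqcup \{k'\}$ remains $\widehat{Q_2}$ and every vertex of $Q_2$ stays green. Second, mutations at $Q_1$ vertices do not depend on the $Q_2$-part viewed as ``coefficients'': the exchange submatrix on $V(Q_1)$ together with the $c$-vectors recorded by the $i'$ evolve exactly as they would for $\widehat{Q_1}$ alone, so $\mathbf{s}_1$ reddens all of $V(Q_1)$ and, by the identification of $\widehat{Q_1}$ with $\widecheck{Q_1}$ after a reddening sequence, the induced framed subquiver on $V(Q_1) \sqcup \{i'\}$ becomes a copy of $\widecheck{Q_1}$. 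Third, and most importantly, no arrow is ever created inside $Q_2$: such an arrow could only appear through a composition $a \to i \to b$ with $a,b \in V(Q_2)$ and $i \in V(Q_1)$, that is, at a vertex of $Q_1$ carrying simultaneously an incoming and an outgoing arrow to $Q_2$ at the moment it is mutated. Combining these, after $\mathbf{s}_1$ the quiver is again a triangular extension, now of $\widecheck{Q_1}$ and $\widehat{Q_2}$, with all arrows between the two sides reversed so that they run from $V(Q_2)$ to $V(Q_1)$, and with $V(Q_2)$ green and $V(Q_1)$ red.

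The second phase is then the mirror image: apply $\mathbf{s}_2$ at the $Q_2$ vertices of this new quiver, whose cross arrows now point from $V(Q_2)$ to $V(Q_1)$, so $Q_2$ plays the role of the source side. The same three observations, with the roles of the two factors exchanged, show that $\mathbf{s}_2$ reddens $V(Q_2)$ while leaving $V(Q_1)$ red---the latter because the frozen vertices $i'$ are untouched once $V(Q_1)$ has reached its coframed state. After $\mathbf{s} = \mathbf{s}_1 \mathbf{s}_2$ every mutable vertex is therefore red, so $\mathbf{s}$ is a reddening sequence for $Q$. For the maximal green version one checks that each mutated vertex is in fact green in $\widehat{Q}$ at the moment of mutation: during phase one a vertex of $Q_1$ has no arrow to any $k'$, so being green relative to $\{i'\}$ coincides with being green in $\widehat{Q}$, and symmetrically in phase two; hence if $\mathbf{s}_1$ and $\mathbf{s}_2$ are maximal green then so is $\mathbf{s}$.

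The main obstacle is the third fact in phase one: controlling the arrows between the two sides so that the cross arrows reverse uniformly and, in particular, so that no spurious arrow is produced inside $Q_2$. This is exactly a sign-coherence statement with $Q_2$ viewed as a coefficient system---each vertex of $Q_1$ must keep all of its $Q_2$-incident arrows of a single orientation throughout $\mathbf{s}_1$---and it is where the real input is required. I would establish it using sign-coherence in the sense of Derksen--Weyman--Zelevinsky together with the separation of the exchange dynamics from the coefficients (equivalently, the fact that a reddening sequence negates the entire extended exchange matrix up to a permutation of the mutable indices); this is the technical content that the scattering-diagram methods of Cao--Li supply in the skew-symmetrizable generality of \cite{CL}. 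Once this coefficient sign-coherence is in hand, the remaining steps are the routine incidence bookkeeping described above.
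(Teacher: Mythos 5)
The paper offers no proof of this lemma: it is quoted directly from Cao--Li \cite[Theorem~4.5, Remark~4.6]{CL}, so there is no internal argument to compare against. Your sketch is, in substance, a reconstruction of the proof in that reference (and of Garver--Musiker's earlier special case): concatenate the two sequences, source side of the triangular extension first, and control the cross arrows by a sign-coherence statement. The order is right (for $1 \to 2$ the sequence $(1,2)$ reddens while $(2,1)$ does not), and you correctly isolate the crux. Two remarks on that crux. First, the statement you need is sign-coherence for an \emph{arbitrary} non-negative coefficient block (a vertex of $Q_1$ may have several arrows to several vertices of $Q_2$, or none), so it does not follow from Derksen--Weyman--Zelevinsky sign-coherence of $c$-vectors alone; one needs the tropical separation formula, which writes the evolved coefficient block as the $C$-matrix applied to the initial one (the tropical $F$-polynomial corrections vanish precisely because the initial block has a uniform sign). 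This is Cao--Li's ``uniform column sign-coherence,'' deduced by linear algebra from $C$-matrix sign-coherence rather than by scattering diagrams directly. Second, your three bookkeeping facts do not yet exclude arrows being created between $V(Q_2)$ and the frozen copies $j'$ of $V(Q_1)$ during phase one (via a path $a \to i \to j'$), nor, symmetrically, between $V(Q_1)$ and the frozen copies of $V(Q_2)$ during phase two; such arrows would destroy the final all-red condition. They are ruled out by the same separation formula, which forces the sign of $i$'s arrows into $V(Q_2)$ to agree at every step with the sign of its $c$-vector, so $i$ never simultaneously carries an incoming arrow from $V(Q_2)$ and an outgoing arrow to a frozen vertex. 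With those two points supplied, your outline is correct and coincides with the argument of the cited source.
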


\section{Existence of reddening sequences}
In this section we define Banff quivers and the class $\mathcal{P}$.
We show that Banff quivers and quivers in the class $\mathcal{P}$ admit reddening sequences.

\subsection{Banff quivers}\label{subsec:Banff}
A \emph{bi-infinite path} in a quiver~$Q$ is a sequence $(i_a)_{a \in \mathbb{Z}}$ of mutable vertices such that $i_a \to i_{a+1}$ is a arrow for each $a \in \mathbb{Z}$.
A pair of vertices $(i,j)$ is a \emph{covering pair} if $i \to j$ is an arrow which is not part of any bi-infinite path.
Muller's class of Banff quivers is the smallest class of quivers such that
\begin{itemize}\itemsep=0pt
\item any acyclic quiver is Banff,
\item any quiver mutation equivalent to a Banff quiver is Banff,
\item and any quiver $Q$ with a covering pair $(i,j)$ where both $Q \setminus \{i\}$ and $Q \setminus \{j\}$ are Banff is a~Banff quiver.
\end{itemize}
A demonstration that a quiver is Banff is shown in Fig.~\ref{fig:Banff}.

\begin{Remark}It is possible to replace ``any acyclic quiver is Banff'' with ``any quiver which is a~collection of isolated vertices is Banff'' to give closer analogy to the class $\mathcal{P}$ define in Section~\ref{subsec:P}.
\end{Remark}

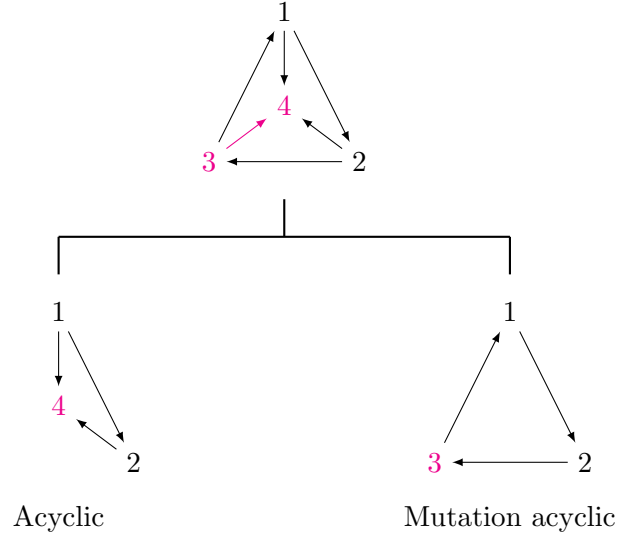
\begin{figure}[t]\centering
\begin{tikzpicture}
\node (1) at (0,1) {$1$};
\node (2) at (1,-1) {$2$};
\node (3) at (-1,-1) {$\color{magenta}3$};
\node (4) at (0,-0.25) {$\color{magenta}4$};

\draw[-{latex}] (1) to (2);
\draw[-{latex}] (2) to (3);
\draw[-{latex}] (3) to (1);
\draw[-{latex}] (1) to (4);
\draw[-{latex}] (2) to (4);
\draw[magenta, -{latex}] (3) to (4);

\draw[thick] (0,-1.5) -- (0,-2);
\draw[thick] (-3,-2) --(3,-2);
\draw[thick] (-3,-2) --(-3,-2.5);
\draw[thick] (3,-2.5) --(3,-2);

\node (1) at (-3,-3) {$1$};
\node (2) at (-2,-5) {$2$};
\node (4) at (-3,-4.25) {$\color{magenta}4$};

\draw[-{latex}] (1) to (2);
\draw[-{latex}] (1) to (4);
\draw[-{latex}] (2) to (4);

\node at (-3,-5.75) {Acyclic};

\node (1) at (3,-3) {$1$};
\node (2) at (4,-5) {$2$};
\node (3) at (2,-5) {$\color{magenta}3$};

\draw[-{latex}] (1) to (2);
\draw[-{latex}] (2) to (3);
\draw[-{latex}] (3) to (1);

\node at (3,-5.75) {Mutation acyclic};

\end{tikzpicture}
\caption{Showing a quiver is Banff using a covering pair consisting of vertices $3$ and~$4$.}\label{fig:Banff}
\end{figure}

\begin{Theorem}\label{thm:Banff}
Let $Q$ be a Banff quiver, then $Q$ admits a reddening sequence.
\end{Theorem}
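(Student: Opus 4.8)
The plan is to show that the class $\mathcal{R}$ of quivers admitting a reddening sequence contains every Banff quiver, by checking that $\mathcal{R}$ is closed under the three operations that generate the Banff class; since the Banff class is the smallest such class, this inclusion then follows at once. Two of the closure properties are immediate: acyclic quivers are known to admit maximal green (hence reddening) sequences, so $\mathcal{R}$ contains all acyclic quivers, and Lemma~\ref{lem:mutation} says precisely that $\mathcal{R}$ is closed under mutation equivalence. All of the work therefore goes into the covering-pair step: assuming $(i,j)$ is a covering pair of $Q$ with $Q \setminus \{i\} \in \mathcal{R}$ and $Q \setminus \{j\} \in \mathcal{R}$, I must produce a reddening sequence for $Q$.

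The key idea for this step is that the covering-pair hypothesis forces a triangular-extension decomposition of $Q$. First I would translate the combinatorial condition ``$i \to j$ lies on no bi-infinite path'' into a reachability statement. Because $Q$ is finite, an infinite forward path from $j$ exists if and only if $j$ can reach a directed cycle, and an infinite backward path into $i$ exists if and only if $i$ is reachable from a directed cycle. Hence the covering-pair condition splits into two cases: either (A) $j$ reaches no directed cycle, or (B) $i$ is reachable from no directed cycle. In case (A) I take $D$ to be the forward closure of $j$, the set of all vertices reachable from $j$ together with $j$ itself. Since $D$ is closed under outgoing arrows, every arrow between $D$ and $V(Q) \setminus D$ points into $D$, so $Q$ is a triangular extension of $Q|_{V(Q) \setminus D}$ and $Q|_D$. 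One checks that $Q|_D$ is acyclic (a cycle inside $D$ would be reachable from $j$), that $j \in D$ while $i \notin D$ (otherwise $i \to j$ would close a cycle reachable from $j$), so both parts are nonempty, and that $Q|_{V(Q)\setminus D}$ is an induced subquiver of $Q \setminus \{j\}$, hence lies in $\mathcal{R}$ by Lemma~\ref{lem:subquiver}. As $Q|_D$ is acyclic it also lies in $\mathcal{R}$, and Lemma~\ref{lem:directsum} then yields a reddening sequence for $Q$. Case (B) is the mirror image, using the backward closure $U$ of $i$, the acyclic piece $Q|_U$, and the induced subquiver $Q|_{V(Q)\setminus U}$ of $Q \setminus \{i\}$.

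The step I expect to be the main obstacle is the translation of the bi-infinite-path condition into the clean reachability dichotomy (A)/(B), together with the verification that the resulting forward or backward closure genuinely yields a valid triangular extension into two nonempty pieces, one of which is acyclic and the other an induced subquiver of $Q \setminus \{j\}$ or $Q \setminus \{i\}$. Once that dictionary is in place, the three lemmas of Section~\ref{sec:back} assemble into the result with no further computation; indeed, the same argument shows that $\mathcal{R}$ satisfies each defining closure property of the Banff class, so that $Q$ being Banff forces $Q \in \mathcal{R}$ directly, with no separate induction on the number of vertices required.
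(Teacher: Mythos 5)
Your proposal is correct, and its core is the same as the paper's: decompose $Q$ as a triangular extension using the forward (or backward) closure of one end of the covering pair, feed the two pieces into Lemma~\ref{lem:subquiver}, and finish with Lemmas~\ref{lem:mutation} and~\ref{lem:directsum}. The two differences are organizational. First, you verify that the class $\mathcal{R}$ of quivers with reddening sequences satisfies the three closure properties defining the Banff class, whereas the paper runs an induction on the number of vertices; your framing is the cleaner way to exploit a ``smallest class'' definition and dispenses with the base-case bookkeeping. Second, your reachability dichotomy (A)/(B) and the acyclicity of $Q|_D$ (resp.\ $Q|_U$) are correct but not needed: taking $B$ to be the forward closure of $j$ in \emph{all} cases, the covering-pair condition already forces $i \notin B$ (a path $j \rightsquigarrow i$ together with $i \to j$ would close a cycle through $i\to j$, hence a bi-infinite path), so $Q|_B$ is an induced subquiver of $Q\setminus\{i\} \in \mathcal{R}$ and $Q|_{V(Q)\setminus B}$ of $Q\setminus\{j\} \in \mathcal{R}$, and Lemma~\ref{lem:subquiver} handles both pieces without any appeal to acyclicity or to the fact that acyclic quivers admit maximal green sequences. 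This is exactly what the paper does, and it shortens your argument to a single case.
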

\begin{proof}We will induct on the number of vertices of $Q$.
If $Q$ has a single vertex, the result is immediate.
The result is also immediate if $Q$ a collection of isolated vertices.
Now assume $Q$ is a non-isolated quiver with more than one vertex.
Since $Q$ is a Banff quiver there is covering pair in either $Q$ or some quiver mutation equivalent to $Q$.
We may assume that $Q$ has this covering pair because existence of a reddening sequence is mutation invariant by Lemma~\ref{lem:mutation}.

Let $(i,j)$ be a covering pair used in determining that $Q$ is Banff.
Let $B$ be the set of vertices~$k$ such that there exists a directed path from~$j$ to~$k$ in~$Q$.
Here we count the path consisting of no arrows, and hence $j \in B$.
Let $A$ be the complement of $B$ in $V(Q)$.
We have $i \in A$ since $(i,j)$ is a covering pair.
If $i \not\in A$, then $i \in B$ and there would be a cycle containing the arrow $i \to j$ which would contradict the fact that $(i,j)$ is a covering pair.
It follows by the definition of the sets $A$ and $B$ that $Q$ is a triangular extension of $Q|_A$ and $Q|_B$ since if there was an arrow $k \to \ell$ with $k \in B$ then $\ell \in B$.
That is, there are no arrows $k \to \ell$ with $k \in B$ and $\ell \in A$.

\looseness=1 Now $Q \setminus \{i\}$ and $Q \setminus \{j\}$ are both Banff quivers with strictly fewer vertices than $Q$.
Hence, $Q \setminus \{i\}$ and $Q \setminus \{j\}$ admit reddening sequences by induction.
The quiver $Q|_A$ is an induced subquiver of $Q \setminus \{j\}$ and $Q|_B$ is an induced subquiver of $Q \setminus \{i\}$.
Thus, $Q|_A$ and $Q|_B$ both admit reddening sequences by Lemma~\ref{lem:subquiver}.
We then conclude that $Q$ admits a reddening sequence by Lemma~\ref{lem:directsum}.
\end{proof}

\subsection[The class $\mathcal{P}$]{The class $\boldsymbol{\mathcal{P}}$}\label{subsec:P}
The following three properties define \emph{the class $\mathcal{P}$}:
\begin{itemize}\itemsep=0pt
\item The quiver with one vertex is in the class $\mathcal{P}$.
\item The class $\mathcal{P}$ is closed under quiver mutation.
\item The class $\mathcal{P}$ is closed under taking any triangular extension of two quivers in the class $\mathcal{P}$.
\end{itemize}
This class of quivers was introduced by Kontsevich and Soibelman~\cite[Section 8.4]{KS}.
The nest theorem follows immediately from results in the literature.
This theorem will be used as a~star\-ting point for exploration of classes of quivers defined by similar properties.

\begin{Theorem}\label{thm:P}
Let $Q$ be in the class $\mathcal{P}$, then $Q$ admits a reddening sequence.
\end{Theorem}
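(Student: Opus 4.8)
The plan is to observe that the property ``admits a reddening sequence'' is inherited through each of the three operations used to define the class $\mathcal{P}$, so that the collection of quivers enjoying this property contains all of $\mathcal{P}$ by minimality. Concretely, let $\mathcal{R}$ denote the collection of all quivers that admit a reddening sequence, and verify that $\mathcal{R}$ satisfies the three defining properties of $\mathcal{P}$. For the base case, the quiver on a single vertex lies in $\mathcal{R}$: its framed quiver has vertices $1$ and $1'$ with the arrow $1 \to 1'$, and mutating at $1$ reverses this arrow to yield $1' \to 1$, i.e.\ the coframed quiver, so the single vertex becomes red. Thus the one-vertex quiver admits a (in fact maximal green) reddening sequence.

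The two remaining closure conditions are exactly the content of the lemmas already recorded. Closure of $\mathcal{R}$ under quiver mutation is Lemma~\ref{lem:mutation}, and closure of $\mathcal{R}$ under taking a triangular extension of two of its members is Lemma~\ref{lem:directsum}. Since $\mathcal{R}$ contains the one-vertex quiver and is closed under both mutation and triangular extension, it is one of the classes relative to which $\mathcal{P}$ is minimal, and therefore $\mathcal{P} \subseteq \mathcal{R}$, which is precisely the assertion of the theorem. Equivalently, one may run a structural induction on a construction sequence exhibiting a given $Q \in \mathcal{P}$ as a member of the class: the base case is the single vertex, and each step applies either Lemma~\ref{lem:mutation} or Lemma~\ref{lem:directsum} to the quivers produced at the previous stage, both of which already admit reddening sequences by the inductive hypothesis.

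I do not expect any genuine obstacle here, since the theorem is a formal consequence of the three ingredients and all the substantive work lives in the lemmas, especially Lemma~\ref{lem:directsum} for triangular extensions. The only point that requires a little care is the shape of the induction: because mutation does not decrease the number of vertices, an induction on $|V(Q)|$ alone does not cleanly accommodate the mutation step. I would avoid this by arguing through the minimality of $\mathcal{P}$ as above, or else by inducting on the length of the construction sequence witnessing membership of $Q$ in $\mathcal{P}$ rather than on the vertex count.
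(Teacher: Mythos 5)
Your proof is correct and takes essentially the same route as the paper: the one-vertex quiver admits a reddening sequence, and Lemmas~\ref{lem:mutation} and~\ref{lem:directsum} show the collection of quivers admitting reddening sequences satisfies the defining closure properties of $\mathcal{P}$, so minimality gives the result. Your remark about inducting on the construction sequence rather than on vertex count is a sensible precision, but it is the same argument the paper gives in compressed form.
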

\begin{proof}The quiver with one vertex admits a reddening sequence.
The theorem then follows from Lemmas~\ref{lem:mutation} and~\ref{lem:directsum} along with the definition of the class $\mathcal{P}$.
\end{proof}

We now discuss the relationship between the class $\mathcal{P}$ and the class of Banff quivers.
Ladkani~\cite[Remark 4.21]{Lad} also considers the class $\mathcal{P}'$ of quivers defined by:
\begin{itemize}\itemsep=0pt
\item The quiver with one vertex is in the class $\mathcal{P}'$.
\item the class $\mathcal{P}'$ is closed under quiver mutation.
\item The class $\mathcal{P}'$ is closed under taking triangular extensions of quivers where one is in the class $\mathcal{P}'$ and the other is the quiver with one vertex.
\end{itemize}
Now let $\mathcal{B}'$ denote the class of quivers defined by:
\begin{itemize}\itemsep=0pt
\item Any quiver without arrows is in $\mathcal{B}'$.
\item Any quiver mutation equivalent to a quiver in $\mathcal{B}'$ is in $\mathcal{B}'$.
\item Any quiver $Q$ with an arrow $i \to j$ such that $i$ is a source or $j$ is a sink, and both $Q \setminus \{i\}$ and $Q \setminus \{j\}$ are in $\mathcal{B}'$ is in $\mathcal{B}'$.
\end{itemize}
Notice $\mathcal{P}'$ and $\mathcal{B}'$ are subclasses of $\mathcal{P}$ and $\mathcal{B}$ respectively.
These classes make use of sources (or equivalently sinks) which can be helpful.
In practice this is how quivers are often shown to be Banff.
For example, quivers from many marked surfaces~\cite[Section 10]{MullerADV} and quivers for Grassmannians~\cite{MS} are in $\mathcal{B}'$ since covering pairs are always taken to use a source (or sink).
The next proposition follows immediately from the definitions of $\mathcal{P}'$ and $\mathcal{B}'$.

\begin{Proposition}\label{prop:BP}
If a quiver is in $\mathcal{B}'$, then the quiver is in $\mathcal{P}'$.
\end{Proposition}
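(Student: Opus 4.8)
The plan is to exploit the fact that $\mathcal{B}'$ is by construction the \emph{smallest} class of quivers closed under its three defining operations. Hence, to prove the inclusion $\mathcal{B}' \subseteq \mathcal{P}'$, it suffices to verify that $\mathcal{P}'$ is itself closed under those same three operations: that $\mathcal{P}'$ contains every quiver without arrows, that $\mathcal{P}'$ is closed under mutation, and that $\mathcal{P}'$ is closed under the source/sink deletion operation used to build $\mathcal{B}'$. Once these three closure properties are established for $\mathcal{P}'$, minimality of $\mathcal{B}'$ gives the result with no further work.

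The second property is immediate, since closure of $\mathcal{P}'$ under mutation is part of its definition. For the first property, I would argue that any quiver with no arrows is built up inside $\mathcal{P}'$ by iterated triangular extensions. Starting from the one-vertex quiver, which lies in $\mathcal{P}'$, a quiver on $n$ isolated vertices is the triangular extension of a quiver on $n-1$ isolated vertices (in $\mathcal{P}'$ by induction on $n$) with the one-vertex quiver, taking the connecting arrow set $E$ to be empty. The empty set of arrows vacuously satisfies the directionality requirement of a triangular extension, so the defining closure of $\mathcal{P}'$ applies and every arrowless quiver lands in $\mathcal{P}'$.

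The third property is where the real content sits, and it is here that the source/sink hypothesis does the work. Suppose $Q$ has an arrow $i \to j$ with $i$ a source or $j$ a sink, and both $Q \setminus \{i\}$ and $Q \setminus \{j\}$ lie in $\mathcal{P}'$. If $i$ is a source, then $i$ has no incoming arrows, so every arrow incident to $i$ points from $i$ into $Q \setminus \{i\}$; this is exactly the condition needed to exhibit $Q$ as a triangular extension of the one-vertex subquiver on $i$ and the quiver $Q \setminus \{i\}$. Since $Q \setminus \{i\} \in \mathcal{P}'$ and the other factor is the one-vertex quiver, the defining closure of $\mathcal{P}'$ places $Q$ in $\mathcal{P}'$. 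The case where $j$ is a sink is symmetric: $j$ has no outgoing arrows, so all arrows incident to $j$ run from $Q \setminus \{j\}$ into $j$, presenting $Q$ as a triangular extension of $Q \setminus \{j\}$ with the one-vertex subquiver on $j$, and again $Q \in \mathcal{P}'$.

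The main obstacle, such as it is, is the recognition in the third step that the source (resp.\ sink) condition is precisely what aligns all the arrows between the deleted vertex and the rest of the quiver in a single direction, which is exactly the directionality constraint in the definition of a triangular extension; this is also why the operation defining $\mathcal{P}'$ only permits extension by a single vertex rather than by an arbitrary factor. Note that the hypothesis supplies both $Q \setminus \{i\}$ and $Q \setminus \{j\}$ in $\mathcal{P}'$, but only the one matching the source/sink alternative is actually used. With all three closure properties verified for $\mathcal{P}'$, the minimality of $\mathcal{B}'$ finishes the proof.
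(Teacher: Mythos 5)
Your proof is correct and is essentially the argument the paper has in mind: the paper simply asserts that the proposition ``follows immediately from the definitions,'' and your write-up supplies exactly the intended details, namely that $\mathcal{P}'$ satisfies all three closure properties defining $\mathcal{B}'$ (arrowless quivers via iterated triangular extensions with empty connecting arrow set, mutation closure by definition, and the source/sink deletion step realized as a one-vertex triangular extension), so minimality of $\mathcal{B}'$ gives the inclusion.
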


\begin{Question}\label{q:BP}
Does there exist a Banff quiver which is not in the class $\mathcal{P}$?
\end{Question}

\begin{Question}\label{q:B'}
Does there exist a Banff quiver which is not in $\mathcal{B}'$?
\end{Question}

A quiver has a covering pair if and only if it has a source or sink~\cite[Proposition~8.1]{MullerADV}.
So, at any step which covering pairs are available there is the option to choose a covering pair which contains a source or sink.
A quiver giving a positive answer to Question~\ref{q:B'} must fail to be Banff whenever using only source and sink covering pairs, but succeed when making use of a covering pair not containing a source nor a sink.

Lam and Speyer's class of Louise quivers~\cite{LS} is another class of quivers for which similar questions could be asked. It would be interesting to better understand the relationship of Louise quivers to the classes of quivers defined in this section.
Any Louise quiver is Banff, and to the authors' knowledge there is no known example of a Banff quiver which is not Louise.
We will not define or work further with Louise quivers here.

The results in this section readily generalize to the following system for producing quivers with reddening sequences.
We can choose a property of a quiver for which we know any quiver with this property admits a reddening sequence.
We will say a quiver is of type $T$ if it has this chosen property.
We then create a collection $\mathcal{C}$ of quivers by:
\begin{itemize}\itemsep=0pt
\item Any quiver of type $T$ is in $\mathcal{C}$.
\item Any quiver mutation equivalent to a quiver in $\mathcal{C}$ is in $\mathcal{C}$.
\item The triangular extension of any two quivers in $\mathcal{C}$ is in $\mathcal{C}$.
\end{itemize}
If a quiver is of type $T$ only if it has a single vertex, then $\mathcal{C}$ is just the class~$\mathcal{P}$. Theorem~\ref{thm:Banff} says we can take a quiver to be of type $T$ if it is Banff.
If Question~\ref{q:BP} has a positive answer then this will be a class of quivers with reddening sequences which is strictly larger than the class $\mathcal{P}$. This raises the following fairly ambitious question:

\begin{Question}\label{q:minimal generating set}
What is a minimal $T$ for which $\mathcal{C}$ consists of all quivers which admit reddening sequences?
\end{Question}

The goal would be to classify all quivers which admit a reddening sequence by looking at finding the essential generating quivers up to triangular extension and mutation. As one can see from the class $\mathcal{P}$, a small set of generating quivers can produce a large and interesting class.
Also, the collection of quivers which admit a reddening sequence is strictly larger than the class~$\mathcal{P}$.
The quivers associated to a triangulation of a torus with one boundary component and one marked point on the boundary give examples of quivers not in the class~$\mathcal{P}$ which admit reddening sequences.
Such a quiver is shown in Fig.~\ref{fig:dreaded} along with a maximal green (reddening) sequence for it.
Up to isomorphism the quiver shown in the figure is the only quiver for the torus with one boundary component and one marked point on the boundary.
Hence, it is readily checked that the quiver in Fig.~\ref{fig:dreaded} is not Banff either.
Furthermore, quivers from closed surfaces with more than one puncture admit reddening sequence~\cite{BucherMills}, but they are neither Banff~\cite[Theorem 10.9]{MullerADV} nor do they belong to class $\mathcal{P}$~\cite[Theorem~4.11]{Lad}.

\begin{figure}[t]\centering
\begin{tikzpicture}
\node (1) at (0,0) {$1$};
\node (2) at (2,0) {$2$};
\node (3) at (1,0.75) {$3$};
\node (4) at (1,2) {$4$};
\draw[-{latex}] (1) to (2);
\draw[-{latex}] (1) to (3);
\draw[-{latex}] (2) to (3);
\draw[-{latex}] (4) to (1);
\draw[-{latex}] (4) to (2);
\draw[-{latex}, bend right=13] (3) to (4);
\draw[-{latex},bend left=13] (3) to (4);
\end{tikzpicture}
\caption{Quiver for torus with one boundary component and one marked point. A maximal green sequence for this quiver is $(1,3,4,2,1,3)$.}\label{fig:dreaded}
\end{figure}
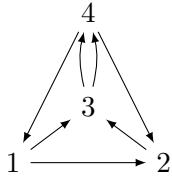

\section{Locally acyclic cluster algebras}\label{sec:cluster}
Let $\mathbb{P}$ be a semifield.
Let $\mathcal{F}$ be a field which contains $\mathbb{Z}\mathbb{P}$.
A \emph{seed} of rank $n$ in $\mathcal{F}$ is a triple $(\mathbf{x},\mathbf{y},Q)$.
The \emph{cluster} $\mathbf{x} = \{x_1,x_2, \dots , x_n\}$ is an $n$-tuple in $\mathcal{F}$ which freely generates $\mathcal{F}$ as a~field over the fraction field of $\mathbb{Z}\mathbb{P}$.
The \emph{coefficients} $\mathbf{y} = \{y_1,y_2, \dots , y_n\}$ consist of elements of $\mathbb{P}$.
Here~$Q$ a is quiver on vertex set $V(Q) = \{1,2,\dots, n\}$.

A seed $(\mathbf{x},\mathbf{y}, Q)$ may be mutated at any index $1 \leq i \leq n$, to produce a new seed $(\mu_i(\mathbf{x}),\allowbreak \mu_i(\mathbf{y}),\mu_i(Q))$.
Quiver mutation works exactly as defined in Section~\ref{sec:back}.
Let~$Q_{ij}$ denote the difference between the number of arrows $i \to j$ and the number of arrows $j \to i$ in~$Q$.
Note that then skew symmetry will follow. In other words $Q_{ji} = -Q_{ij}$.
The cluster mutates as $\mu_i(\mathbf{x}) := \{x_1,x_2,\dots,x_{i-1},x_i', x_{i+1},\dots, x_n\}$, where
\[x_ix_i':=\frac{y_i}{y_i \oplus 1} \prod_{Q_{ij} > 0} x_j^{Q_{ij}} + \frac{1}{y_i \oplus 1} \prod_{Q_{ji} >0}x_j^{Q_{ji}}.\]
The coefficients mutate by $\mu_i(\mathbf{y}) : = \{y_1',y_2', \dots, y_n'\},$ where
\[ y'_k:=
\begin{cases}
y_k^{-1} & \text{ if } k=i, \\
y_k (y_i \oplus 1)^{Q_{ik}} & \text{ if } k\neq i \text{ and } Q_{ik} \geq 0,\\
y_k \left(\dfrac{y_i}{y_i \oplus 1}\right)^{Q_{ki}} & \text{ if } k\neq i \text{ and } Q_{ki} \geq 0.
\end{cases}\]
Two seeds are call \emph{mutation equivalent} if one can be obtained from the other (up to permuting the indices) by a sequence of mutations.

Given a seed $(\mathbf{x},\mathbf{y},Q)$ we will call the union of all $\mathbf{x}'$ from any seed which is mutation equivalent to $(\mathbf{x},\mathbf{y},Q)$ the set of \emph{cluster variables}.
The \emph{cluster algebra}, $\mathcal{A} = \mathcal{A}(\mathbf{x},\mathbf{y},Q)$, is the unital $\mathbb{ZP}$-subalgebra of $\mathcal{F}$ generated by the cluster variables.
Here we have restricted our attention to the ground ring $\bbZ\bbP$ since we will make use of the theory of cluster localization.
Notice that since we are allowed to freely mutate when generating the cluster variables, that two mutation equivalent seeds will generate the same cluster algebra.

The Laurent phenomenon~\cite[Theorem 3.1]{CAI} states that $\mathcal{A}$ is a subalgebra of $\mathbb{ZP}\big[x_1^{\pm 1},x_2^{\pm 1},\dots ,\allowbreak x_n^{\pm 1}\big]$.
The \emph{upper cluster} algebra is denoted by $\mathcal{U}$ or $\mathcal{U}(\mathbf{x},\mathbf{y},Q)$ and defined by
\[ \mathcal{U} := \bigcap_{(\mathbf{x},\mathbf{y},Q)} \mathbb{ZP}\big[x_1^{\pm 1},x_2^{\pm 1},\dots , x_n^{\pm 1}\big],\]
where the intersection is taken over all seeds.
The Laurent phenomenon gives an inclusion $\mathcal{A} \subseteq \mathcal{U}$.
We will be interested in conditions on the quiver $Q$ which imply $\mathcal{A} = \mathcal{U}$.

To show $\mathcal{A} = \mathcal{U}$ we will use Muller's theory of locally acyclic cluster algebras~\cite{MullerADV}.
Let $(\mathbf{x},\mathbf{y},Q)$ be a seed of rank $n$.
The \emph{freezing} of $\A$ at $x_n \in \x$ is the cluster algebra $\A^{\dagger} = \A\big(\mathbf{x}^{\d}, \mathbf{y}^{\d}, Q^{\d}\big)$ defined as follows
\begin{itemize}\itemsep=0pt
\item The new semifield is $\bbP^{\d} = \bbP \times \bbZ$ with $x_n$ as the generator of the free abelian group $\bbZ$. The auxiliary addition is extended as
\[\big(p_1 x_n^a\big) \oplus \big(p_2 x_n^b\big) = (p_1 \oplus p_2)x_n^{\min(a,b)}.\]
\item The new ambient field is $\mathcal{F}^{\d} = \mathbb{Q}\big(\bbP^{\d}, x_1, x_2, \dots, x_{n-1}\big)$ and the new cluster is $\x^{\d} = (x_1, x_2, \dots, x_{n-1})$.
\item The new coefficients are $\y^{\d} = \big(y_1^{\d}, y_2^{\d}, \dots, y_{n-1}^{\d}\big)$ where $y_i^{\d} = y_i x_n^{Q_{in}}$.
\item The new quiver $Q^{\d}$ is obtained from $Q$ by deleting the vertex $n$.
\end{itemize}
We will denote the new upper cluster algebra by $\U^{\d}$.
By permuting indices we can freeze at any $x_i \in \x$.
Freezing at a subset of cluster variables can be done iteratively and is independent of the order of freezing.
When the freezing $\A^{\d}$ of $\A$ at $\{x_{i_1}, x_{i_2}, \dots, x_{i_m}\} \in \x$ satisfies $\A^{\d} = \A\big[(x_{i_1}x_{i_2}\cdots x_{i_m})^{-1}\big]$ we then call $\A^{\d}$ a \emph{cluster localization}.
A~\emph{cover} of $\A$ is a collection $\{\A_i\}_{i \in I}$ of cluster localizations such that there exists $i \in I$ where $\A_iP \subsetneq \A_i$ for any prime ideal $P \subseteq \A$.
A~cluster algebra is called \emph{acyclic} if it has a seed $(\mathbf{x},\mathbf{y},Q)$ where $Q$ is an acyclic quiver.
A~\emph{locally acyclic} cluster algebra is a cluster algebra for which there exists a cover by acyclic cluster algebras.
Being a cover is a transitive property.
Thus to show a cluster algebra is locally acyclic it suffices to produce a cover by cluster algebras known to be locally acyclic.
Another key property of covers is that equality of the cluster algebra with its upper cluster algebra can be checked locally.
\begin{Lemma}[{\cite[Lemma 2]{MullerSIGMA}}]\label{lem:Ucover}
Let $\{\A_i\}_{i\in I}$ be a cover of $\A$. If $\A_i=\U_i$ for all $i \in I$, then $\A = \U$.
\end{Lemma}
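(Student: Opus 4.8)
The plan is to prove the two inclusions of $\A = \U$ separately. The inclusion $\A \subseteq \U$ is the Laurent phenomenon and is already recorded, so the entire task is to show $\U \subseteq \A$. I would exploit that each member $\A_i$ of the cover is by definition a cluster localization, hence an honest ring localization $\A_i = \A[f_i^{-1}]$ with $f_i$ a product of cluster variables of $\A$, together with the fact that the upper cluster algebra of a freezing contains the original upper cluster algebra, i.e.\ $\U \subseteq \U_i$. The latter is elementary: the seeds of the freezing form a subset of the seeds of $(\x,\y,Q)$, and freezing merely moves the chosen cluster variables into the coefficient semifield, so the Laurent rings defining $\U_i$ agree with those defining $\U$; intersecting over a smaller family of equal-or-larger rings can only enlarge the upper cluster algebra.

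With this in hand the argument is a local-global principle. Fix an arbitrary $u \in \U$; I want $u \in \A$. For every $i \in I$ the chain $u \in \U \subseteq \U_i = \A_i = \A[f_i^{-1}]$ holds, where the middle equality is the hypothesis. Writing $u$ inside the localization $\A[f_i^{-1}]$ and clearing denominators produces an exponent $N_i \ge 0$ with $f_i^{N_i} u \in \A$. I would then collect these relations into the ideal of denominators
\[
I := \{ a \in \A \colon au \in \A \},
\]
which is an ideal of $\A$ because $\U$ is an $\A$-algebra, and which by the previous sentence contains $f_i^{N_i}$ for every $i$.

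It remains to show $I = \A$. Here I invoke the covering condition: for each prime ideal $P \subseteq \A$ there is some $i$ with $\A_i P \subsetneq \A_i$, and for the localization $\A_i = \A[f_i^{-1}]$ this is exactly the statement $f_i \notin P$. Since $P$ is prime, $f_i \notin P$ forces $f_i^{N_i} \notin P$, so $I \not\subseteq P$. Thus $I$ lies in no prime ideal; as every proper ideal of a commutative ring is contained in a maximal (hence prime) ideal, $I$ cannot be proper, so $I = \A$ and $1 \in I$. Therefore $u = 1 \cdot u \in \A$, and since $u \in \U$ was arbitrary we obtain $\U \subseteq \A$ and hence $\A = \U$.

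I expect no single step to be a serious obstacle once the definitions are unwound; the point demanding the most care is translating the scheme-theoretic covering hypothesis ``$\A_i P \subsetneq \A_i$'' into the pointwise statement $f_i \notin P$ and then verifying that the ideal of denominators meets no prime. It is worth noting what is \emph{not} needed: the deeper compatibility $\U_i = \U[f_i^{-1}]$ from Muller's theory, whose hard inclusion $\U_i \subseteq \U[f_i^{-1}]$ would be the genuinely cluster-theoretic input, plays no role here, because the hypothesis $\A_i = \U_i$ already identifies $\U_i$ with the localization $\A[f_i^{-1}]$ and only the easy inclusion $\U \subseteq \U_i$ is invoked.
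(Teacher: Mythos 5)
This lemma is not proved in the paper at all: it is quoted verbatim from Muller [MullerSIGMA, Lemma~2], so there is no in-paper argument to compare against. Your proof is correct and is essentially Muller's own: the content is $\U \subseteq \bigcap_i \U_i = \bigcap_i \A_i = \A$, where the final equality is the standard local--global statement that an element of the fraction field lying in every $\A\big[f_i^{-1}\big]$ of a Zariski cover of $\Spec \A$ by principal opens already lies in $\A$; your ideal-of-denominators argument is the textbook proof of that fact, and your translation of the covering hypothesis $\A_i P \subsetneq \A_i$ into $f_i \notin P$ is exactly right. The one input you treat briefly, $\U \subseteq \U_i$, is genuinely needed and is itself a lemma in Muller's work; your justification (the seeds of the freezing form a sub-collection of the seeds of $\A$ whose Laurent rings coincide, since $\mathbb{Z}\mathbb{P}^{\d}\big[x_1^{\pm1},\dots,x_{n-1}^{\pm1}\big] = \mathbb{Z}\mathbb{P}\big[x_1^{\pm1},\dots,x_{n}^{\pm1}\big]$) is the right one, though it silently uses that the exchange relations of the freezing reproduce those of $\A$, so that the cluster variables of $\A^{\d}$ are literally the same elements of the ambient field. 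Your closing remark is also accurate: the hard compatibility $\U_i = \U\big[f_i^{-1}\big]$ is not needed here because the hypothesis $\A_i = \U_i$ short-circuits it. The only blemish is notational: you reuse $I$ for both the index set of the cover and the ideal of denominators.
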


We also will use a property which guarantees that a freezing is a cluster localization.

\begin{Lemma}[{\cite[Lemma 1]{MullerSIGMA}}]\label{lem:freezelocal}
If $\A^{\d}$ is a freezing of a cluster algebra $\A$ at cluster variables $\{x_{i_1}, x_{i_2}, \dots, x_{i_m}\}$ and $\A^{\d} = \U^{\d}$, then $\A^{\d} = \A\big[(x_{i_1}, x_{i_2}, \dots, x_{i_m})^{-1}\big]$ is a cluster localization.
\end{Lemma}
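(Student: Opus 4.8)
The plan is to prove the two inclusions
\[
\A^{\d} \subseteq \A\big[(x_{i_1}\cdots x_{i_m})^{-1}\big] \qquad\text{and}\qquad \A\big[(x_{i_1}\cdots x_{i_m})^{-1}\big] \subseteq \A^{\d},
\]
the first of which holds unconditionally, while the second is where the hypothesis $\A^{\d} = \U^{\d}$ will enter; together they give the cluster localization condition. Since freezing at a set of cluster variables can be carried out one variable at a time and is independent of the order, I would first reduce to the case $m=1$, freezing at a single $x_n$, and then iterate. After this reduction the target becomes $\A^{\d} = \A[x_n^{-1}]$.

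For the unconditional inclusion $\A^{\d} \subseteq \A[x_n^{-1}]$ I would compare the cluster variables of the freezing with those of $\A$. The seeds of $\A^{\d}$ are precisely the seeds of $\A$ reachable by mutation sequences that never mutate at the frozen index $n$, and the coefficient rule $y_i^{\d} = y_i x_n^{Q_{in}}$ is exactly the one that records the suppressed frozen variable inside the coefficient semifield $\bbP^{\d} = \bbP \times \bbZ$. Comparing the exchange relation for $\A^{\d}$ with the one for $\A$ along such a sequence, each cluster variable $z^{\d}$ of $\A^{\d}$ equals the corresponding cluster variable $z$ of $\A$ times a Laurent monomial in $x_n$. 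Since $z \in \A$ and $x_n$ is inverted on the right, every generator of $\A^{\d}$ lies in $\A[x_n^{-1}]$; combined with $\bbZ\bbP^{\d} = \bbZ\bbP[x_n^{\pm}] \subseteq \A[x_n^{-1}]$ this yields the inclusion. Establishing the precise ``off by a monomial in $x_n$'' statement is the technical heart of the argument and the step I expect to be the main obstacle, since it requires an induction on the length of the mutation sequence, carefully propagating both the coefficient mutation and the cluster mutation.

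For the reverse inclusion I would route through the upper cluster algebras. Because $\bbZ\bbP^{\d} = \bbZ\bbP[x_n^{\pm}]$ and, along any frozen‑avoiding sequence, inverting the freezing cluster $\x^{\d}$ is the same as inverting the corresponding cluster of $\A$ (the two differ only by monomials in the invertible element $x_n$), each Laurent ring in the intersection defining $\U^{\d}$ coincides with the ordinary Laurent ring $\bbZ\bbP\big[(x_1')^{\pm},\dots,(x_n')^{\pm}\big]$ of the associated seed of $\A$. Hence $\U^{\d}$ is the intersection of these Laurent rings taken over the frozen‑avoiding seeds only, which is a subcollection of the seeds defining $\U$, so $\U \subseteq \U^{\d}$; moreover $x_n^{\pm} \in \bbZ\bbP^{\d}$ lies in every term of the intersection, so $x_n$ is invertible in $\U^{\d}$ and therefore $\U[x_n^{-1}] \subseteq \U^{\d}$.

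Finally I would assemble the chain. Using $\A \subseteq \U$ from the Laurent phenomenon, the inclusion of the previous paragraph, and the hypothesis $\U^{\d} = \A^{\d}$,
\[
\A\big[x_n^{-1}\big] \subseteq \U\big[x_n^{-1}\big] \subseteq \U^{\d} = \A^{\d} \subseteq \A\big[x_n^{-1}\big],
\]
so every inclusion is forced to be an equality and $\A^{\d} = \A[x_n^{-1}]$, which is exactly the assertion that the freezing is a cluster localization. Iterating over the frozen variables $x_{i_1},\dots,x_{i_m}$ then gives $\A^{\d} = \A\big[(x_{i_1}\cdots x_{i_m})^{-1}\big]$ in general.
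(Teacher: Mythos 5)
The paper does not actually prove this lemma---it is imported verbatim as \cite[Lemma~1]{MullerSIGMA}---so the relevant comparison is with Muller's original argument, and your proposal reproduces it: the unconditional inclusions $\A^{\d} \subseteq \A\big[(x_{i_1}\cdots x_{i_m})^{-1}\big]$ and $\A\big[(x_{i_1}\cdots x_{i_m})^{-1}\big] \subseteq \U\big[(x_{i_1}\cdots x_{i_m})^{-1}\big] \subseteq \U^{\d}$, closed into a cycle by the hypothesis $\A^{\d}=\U^{\d}$, are exactly the standard proof. One reassurance: the step you single out as the ``technical heart'' is easier than you fear. The coefficient rule $y_i^{\d}=y_i x_n^{Q_{in}}$ together with the extension $\big(p_1x_n^{a}\big)\oplus\big(p_2x_n^{b}\big)=(p_1\oplus p_2)x_n^{\min(a,b)}$ is rigged precisely so that the exchange relation of $\A^{\d}$ at any mutable $i$ coincides \emph{verbatim} with that of $\A$ (the factor $x_n^{Q_{in}}$ or $x_n^{Q_{ni}}$ is regenerated from the coefficient, depending on the sign of $Q_{in}$), so the cluster variables along frozen-avoiding sequences are literally equal, not merely equal up to a monomial; and in any case ``up to a monomial in $x_n$'' would suffice since $x_n$ is inverted on the right-hand side.

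The one genuine slip is the opening reduction: freezing one variable at a time and ``iterating'' the $m=1$ case does not work as stated, because the hypothesis $\A^{\d}=\U^{\d}$ is given only for the freezing at the \emph{full} set $\{x_{i_1},\dots,x_{i_m}\}$. To apply the single-variable lemma at the first stage you would need the intermediate freezing (at $x_{i_1}$ alone) to equal its own upper cluster algebra, which is not among your hypotheses. Fortunately the reduction is unnecessary: every link of your chain holds verbatim for the multivariable freezing---its cluster variables are still cluster variables of $\A$, its coefficient ring is $\bbZ\bbP\big[x_{i_1}^{\pm},\dots,x_{i_m}^{\pm}\big]$, and $\U^{\d}$ is an intersection over a subcollection of seeds of Laurent rings each of which contains $\U\big[(x_{i_1}\cdots x_{i_m})^{-1}\big]$---so you should simply run the argument once with the product $x_{i_1}\cdots x_{i_m}$ playing the role of $x_n$. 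With that repair the proof is complete and agrees with the cited source.
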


We now prove a lemma that will give us a cover. Observe in the situation of Lemma~\ref{lem:cover} the Banff algorithm would freeze $i$ and some $j$ with $i \to j$. The only change we are making is freezing at potentially more vertices.

\begin{Lemma}\label{lem:cover} Let $(\x,\y,Q)$ be a seed for a cluster algebra $\A$ so that
\[V(Q) = \{i\} \sqcup A \sqcup B,\]
where $i$ is a source and there exists an arrow $i \to j$ for all $j \in A$.
Furthermore, let $\A^{\d}$ denote the freezing at $\{i\}$ and $\A^{\d\d}$ the freezing at~$A$.
If both $\A^{\d\d}$ and $\A^{\d}$ are cluster localizations, then $\big\{\A^{\d}, \A^{\d\d}\big\}$ is a cover for $\A$.
\end{Lemma}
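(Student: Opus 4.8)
The plan is to verify directly that $\{\A^\d, \A^{\d\d}\}$ satisfies the definition of a cover. By hypothesis both $\A^\d$ and $\A^{\d\d}$ are already cluster localizations, so the only remaining requirement is the existence of an index in $I = \{\d, \d\d\}$ for which localizing at the corresponding prime ideal is proper; that is, we must produce one of these two localizations, say $\A^\d$, such that $\A^\d P \subsetneq \A^\d$ for every prime ideal $P \subseteq \A$. The natural approach is to track which cluster variables get inverted in each localization. Since $\A^\d$ is the freezing at the single source $i$, Lemma~\ref{lem:freezelocal} identifies it with the localization $\A[x_i^{-1}]$, and similarly $\A^{\d\d} = \A[(\prod_{j \in A} x_j)^{-1}]$.

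First I would examine the collection of cluster variables inverted in passing to each localization. The key combinatorial input is that $i$ is a source with arrows $i \to j$ for all $j \in A$, so $\{i\} \cup A$ is precisely the set of vertices incident to $i$, and together the two freezings invert the cluster variables indexed by $\{i\} \cup A$. I would argue that for an arbitrary prime ideal $P \subseteq \A$, at least one of $x_i$ or the product $\prod_{j \in A} x_j$ avoids $P$, so that localizing at $P$ remains proper for the corresponding member of the cover. The cleanest way to see this is via the Banff/covering-pair philosophy noted in the remark preceding the lemma: the standard Banff argument covers $\A$ using the two freezings at $i$ and at a single $j$ with $i \to j$, and that pair is already known to form a cover. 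Since freezing at the larger set $A$ only inverts additional cluster variables, the localization $\A^{\d\d}$ is a further localization of the Banff freezing at any single $j \in A$, and a localization of a localization used in a valid cover still detects the same prime ideals. Thus the properness condition that held for the Banff cover is inherited by $\{\A^\d, \A^{\d\d}\}$.

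The main step, then, is to reduce the cover property of $\{\A^\d, \A^{\d\d}\}$ to the cover property of the underlying Banff pair $\{\A[x_i^{-1}], \A[x_j^{-1}]\}$ for a fixed $j \in A$. I would make precise that enlarging the frozen set from $\{j\}$ to $A$ replaces $\A[x_j^{-1}]$ by $\A^{\d\d} = \A[x_j^{-1}][(\prod_{k \in A, k \ne j} x_k)^{-1}]$, an overring with the same or fewer surviving prime ideals, and that the existence of an index witnessing proper localization is preserved under this enlargement. The hard part will be confirming that the required index $i \in I$ with $\A_i P \subsetneq \A_i$ for all primes $P$ transfers correctly through this enlargement rather than being destroyed by inverting extra variables; this hinges on the fact that the source vertex $i$ and its out-neighborhood $A$ are positioned so that no prime ideal of $\A$ can simultaneously contain $x_i$ and some $x_j$ for $j \in A$ in a way that breaks both localizations at once. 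Once that positional observation is pinned down using the source hypothesis, the cover property follows, and with both freezings assumed to be cluster localizations the proof is complete.
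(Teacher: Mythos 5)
Your proposal correctly identifies what must be shown: that no prime ideal $P \subseteq \A$ can simultaneously contain $x_i$ and the product $\prod_{j \in A} x_j$. But the route you propose to establish this does not work, and the actual mechanism is never supplied. The step ``a localization of a localization used in a valid cover still detects the same prime ideals'' is false: the condition $\A_k P \subsetneq \A_k$ holds precisely when $P$ misses the multiplicative set being inverted, so enlarging that set (passing from $\A\big[x_j^{-1}\big]$ to $\A\big[\big(\prod_{k \in A} x_k\big)^{-1}\big]$) can only destroy detection, never preserve it. Concretely, the Banff cover for the pair $(i,j)$ tells you that every prime misses $x_i$ or misses $x_j$; it does not tell you that every prime misses $x_i$ or misses \emph{all} of $\{x_k\}_{k \in A}$, which is the strictly stronger statement you need for $\A^{\d\d}$. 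A prime containing $x_i$ and some $x_k$ with $k \in A$, $k \neq j$, but not $x_j$, would be detected by the Banff pair yet is not handled by your reduction.

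The missing idea is to use the exchange relation at the source $i$ directly. Since $i$ is a source, the incoming monomial is empty and
\[
x_i x_i' \;=\; \frac{y_i}{y_i \oplus 1}\prod_{j \in A} x_j^{Q_{ij}} \;+\; \frac{1}{y_i \oplus 1},
\]
where the second term is a unit in $\A$ and $Q_{ij} > 0$ for every $j \in A$. Hence $1 \in \A x_i + \A\big(\prod_{j \in A} x_j\big)$, so no proper (in particular no prime) ideal contains both generators; primality of $P$ then converts $\prod_{j\in A} x_j \notin P$ into $x_j \notin P$ for all $j \in A$. This single computation yields exactly the dichotomy required --- either $\A^{\d}P \subsetneq \A^{\d}$ or $\A^{\d\d}P \subsetneq \A^{\d\d}$ --- without any appeal to the two-element Banff cover. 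You flagged this ``positional observation'' as the hard part but left it unproved; it is in fact the entire content of the lemma.
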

\begin{proof}Under the hypothesis of the lemma it suffices to check for any prime ideal $P \subseteq \A$ either $\A^{\d}P \subsetneq \A^{\d}$ or $\A^{\d\d}P \subsetneq \A^{\d\d}$.
Consider a prime ideal $P$ and the mutation relation at $i$ which gives
\[x_{i}x'_{i} = \frac{y_iM}{y_i \oplus 1} \prod_{j \in A} x_j^{Q_{ij}} + \frac{1}{y_i \oplus 1},\]
where $M$ is a monomial in cluster variables.
Since $1/(y_i \oplus 1)$ is invertible in $\A$ and $Q_{ij} > 0$ for all $j \in A$ it follows that $1 \in \A x_{i} + \A \big(\prod_{j \in A} x_j\big)$.
Since $P$ is prime, it is a proper ideal.
So, $x_{i} \in P$ implies $x_j \not\in P$ for all $j \in A$.
If $x_{i} \not\in P$, then $\A^{\d}P \subsetneq \A^{\d}$.
If $x_{j} \not\in P$ for all $j \in A$, then $\A^{\d\d}P \subsetneq \A^{\d\d}$.
\end{proof}

Let us now consider a class of quivers we call $\mathcal{P}'_{m}$ defined by:
\begin{itemize}\itemsep=0pt
\item The quiver with one vertex is in the class $\mathcal{P}'_m$.
\item The class $\mathcal{P}'_m$ is closed under quiver mutation.
\item If $Q$ is a quiver in $\mathcal{P}'_m$, then $\mathcal{P}'_m$ contains any triangular extension of $Q$ and a quiver with a single vertex $v$ such that the number of vertices in $Q$ not connected to $v$ by an arrow is less than or equal to $m$.
\end{itemize}

\begin{Remark}It is clear that $\mathcal{P}'_{m} \subseteq \mathcal{P}'$ for any $m$.
One should note that it is indeed the case that $\mathcal{P'}_m \subsetneq \mathcal{P}'$.
Ladkani~\cite{Lad} has classified which finite mutation type quivers are in $\mathcal{P}'$.
Using this classification we can choose a~quiver $Q$ in $\mathcal{P}'$ coming from a triangulation of a surface which has $N$ vertices for some $N \geq 8$.
Since any arc in a~triangulation will be in one quadrilateral a~quiver associated to a~surface will only have vertices of degree at most $4$.
It follows that $Q$ is not in $\mathcal{P}'_m$ when $m < N-5$.
\end{Remark}

We will focus on $m=3$ and make use of the fact that quivers on $3$ vertices admit reddening sequences if and only if they are acyclic. We record this as the following lemma which is implied by a result of Seven~\cite[Theorem~1.4]{Seven} on $c$-vectors which holds in skew-symmetrizable generality.

\begin{Lemma}[{\cite[Theorem~1.4]{Seven}}]\label{lem:3vertices}
Let $Q$ be a quiver on three or fewer vertices.
The quiver $Q$ admits a reddening sequence if and only if $Q$ is mutation equivalent to an acyclic quiver.
\end{Lemma}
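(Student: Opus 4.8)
The plan is to split the biconditional and reduce to the only substantial case, a connected quiver on exactly three vertices. If $Q$ has one or two vertices, or is disconnected, then $Q$ is automatically acyclic (a single arrow between two vertices contains no oriented cycle, and no oriented cycle can cross components), so both sides of the equivalence hold and there is nothing to check; I would dispose of these first. For a connected three-vertex quiver I would invoke the well-known rank-three dichotomy (due to Beineke, Br\"ustle, and Hille): such a quiver is either mutation equivalent to an acyclic quiver, or else every quiver in its mutation class is an oriented three-cycle (the mutation-cyclic case), and these two alternatives are mutually exclusive.

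The ``if'' direction is routine. If $Q$ is mutation equivalent to an acyclic quiver $Q'$, then $Q'$ admits a reddening sequence --- indeed a maximal green one, obtained by repeatedly mutating at a source --- and by Lemma~\ref{lem:mutation} the existence of a reddening sequence passes from $Q'$ to $Q$. So this direction amounts to recording the acyclic case and citing mutation invariance.

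For the ``only if'' direction I would argue the contrapositive: a mutation-cyclic three-vertex quiver admits no reddening sequence. The key is to translate ``reddening'' into the language of $c$-vectors. Mutating the framed quiver $\widehat{Q}$ along a sequence $\mathbf{s}$, the arrows between mutable and frozen vertices record the $C$-matrix whose rows are the $c$-vectors of $\mu_{\mathbf{s}}(Q)$; by sign-coherence each row is nonnegative or nonpositive, a mutable vertex is red exactly when its $c$-vector is nonpositive, and hence $\mathbf{s}$ is a reddening sequence precisely when the $C$-matrix equals $-P$ for some permutation matrix $P$ (in accordance with $\widehat{Q} \mapsto \widecheck{Q}$ from \cite{BDP}). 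Seven's Theorem~1.4 describes the $c$-vectors attainable for a skew-symmetrizable $3 \times 3$ exchange matrix; for a mutation-cyclic matrix the associated symmetric form and the Markov-type equation governing cyclic rank-three quivers constrain the $c$-vectors so that the configuration $C = -P$ is never reached along any mutation sequence, which is exactly the non-existence of a reddening sequence.

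I expect the forward direction to be the real obstacle, and in two layers. First, one must set up the $c$-vector bookkeeping carefully so that ``all mutable vertices red'' is literally ``$C = -P$''. Second, and more seriously, one must extract from Seven's classification that the all-negative $C$-matrix is unreachable in the mutation-cyclic case; this is where the Markov quadratic form does the work, and it is the reason the lemma is attributed to \cite{Seven} rather than proved directly. A final point I would verify is that, although the result is often phrased for maximal green sequences, the obstruction here is a statement about which $C$-matrices occur and therefore rules out \emph{all} reddening sequences, not merely the green ones.
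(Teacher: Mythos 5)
Your proposal is correct and matches the paper's approach: the paper gives no proof beyond attributing the statement to Seven's Theorem~1.4, observing (as you do) that the obstruction there is a statement about attainable $c$-vectors and hence rules out reddening sequences and not merely maximal green ones. Your added bookkeeping --- disposing of the sub-three-vertex and disconnected cases, getting the ``if'' direction from source mutations on an acyclic representative plus Lemma~\ref{lem:mutation}, and translating ``all vertices red'' into $C=-P$ via \cite{BDP} --- is accurate but is exactly the routine scaffolding around the same citation to \cite{Seven} that carries the real weight.
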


We are now ready to prove our main theorem on locally acyclic cluster algebras.

\begin{Theorem}\label{thm:locallyacyclic}
If $Q$ is a quiver in the class $\mathcal{P}'_3$, then for any seed $(\x,\y,Q)$ containing the quiver~$Q$ the cluster algebra $\mathcal{A}(\x,\y,Q)$ is locally acyclic.
\end{Theorem}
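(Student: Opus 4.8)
The plan is to induct on the number of vertices of $Q$, manufacturing at each stage the cover supplied by Lemma~\ref{lem:cover}. The base case is the one-vertex quiver, whose cluster algebra is acyclic and therefore locally acyclic. For the inductive step, note first that local acyclicity is an invariant of the cluster algebra $\mathcal{A}(\x,\y,Q)$ and hence of the mutation class, while $\mathcal{P}'_3$ is closed under mutation; tracing the construction of $Q$, the last triangular extension used to build it exhibits $Q$, up to mutation, as a triangular extension of a quiver $Q' \in \mathcal{P}'_3$ on one fewer vertex with a single vertex $v$ joined by an arrow to every vertex of $Q'$ outside a set $N$ with $|N| \le 3$. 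I may therefore assume $Q$ is literally of this form. Finally, mutating at $v$ if necessary --- which, since $v$ meets only the extension arrows, simply reverses them and leaves $Q'$ untouched --- I may assume $v$ is a \emph{source}.

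Now I would apply Lemma~\ref{lem:cover} with $i=v$, with $A = V(Q')\setminus N$ the set of vertices $v$ points to, and with $B = N$, so that $v$ is a source satisfying $v \to j$ for all $j \in A$ exactly as the lemma demands. Let $\mathcal{A}^{\d}$ be the freezing at $\{v\}$ and $\mathcal{A}^{\d\d}$ the freezing at $A$; their quivers are $Q^{\d} = Q'$ and $Q^{\d\d} = Q\setminus A$. To run Lemma~\ref{lem:cover} I must check both freezings are cluster localizations, and by Lemma~\ref{lem:freezelocal} it is enough to show $\mathcal{A}^{\d} = \mathcal{U}^{\d}$ and $\mathcal{A}^{\d\d} = \mathcal{U}^{\d\d}$. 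For $\mathcal{A}^{\d}$ the defining quiver is $Q' \in \mathcal{P}'_3$ on fewer vertices, so the inductive hypothesis makes it locally acyclic, whence $\mathcal{A}^{\d} = \mathcal{U}^{\d}$ by Muller's theorem that locally acyclic cluster algebras equal their upper cluster algebras~\cite{MullerADV}.

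For $\mathcal{A}^{\d\d}$ I would use that no vertex of $N$ is joined to $v$, so $Q\setminus A$ is the disjoint union of the isolated vertex $v$ with the induced subquiver $Q|_N$. Since $\mathcal{P}'_3 \subseteq \mathcal{P}$, Theorem~\ref{thm:P} gives a reddening sequence for $Q$, Lemma~\ref{lem:subquiver} passes it to $Q|_N$, and because $|N|\le 3$ Lemma~\ref{lem:3vertices} then renders $Q|_N$ mutation equivalent to an acyclic quiver. Hence $Q\setminus A$ is mutation equivalent to an acyclic quiver, so $\mathcal{A}^{\d\d}$ is acyclic, in particular locally acyclic and equal to its upper cluster algebra. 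With both freezings now cluster localizations, Lemma~\ref{lem:cover} shows $\{\mathcal{A}^{\d}, \mathcal{A}^{\d\d}\}$ is a cover of $\mathcal{A}$; both members are locally acyclic, so transitivity of covers forces $\mathcal{A}$ to be locally acyclic, closing the induction. The sole degenerate case is $A = \emptyset$ (so $v$ is isolated and $|V(Q')|\le 3$), where the cover is trivial; but then $Q$ is itself the disjoint union of an isolated vertex with $Q|_N = Q'$, which the same reddening-sequence argument shows is mutation equivalent to acyclic, so $\mathcal{A}$ is acyclic directly.

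I expect the real work to lie not in the combinatorial decomposition but in certifying that the two freezings are cluster localizations. The bound $m=3$ enters in exactly one place: it is what keeps the leftover induced subquiver $Q|_N$ within the scope of Lemma~\ref{lem:3vertices}, the only step that upgrades the mere existence of a reddening sequence to genuine mutation-acyclicity, and hence to the equality $\mathcal{A}^{\d\d} = \mathcal{U}^{\d\d}$ that Lemma~\ref{lem:freezelocal} requires. For larger $m$ this upgrade is unavailable, which is precisely why the theorem is restricted to $\mathcal{P}'_3$. One further point deserving care is that the inductive hypothesis must be phrased as a property of the quiver valid for every seed --- including seeds over the enlarged semifield $\mathbb{P}^{\d}$ produced by freezing --- so that it legitimately applies to $\mathcal{A}^{\d}$.
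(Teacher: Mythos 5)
Your proof is correct and follows essentially the same route as the paper: induct on the number of vertices, realize $Q$ (up to mutation) as a triangular extension by a source $i_0=v$, freeze at $\{v\}$ and at $A$, certify both freezings via Lemma~\ref{lem:freezelocal} (using the inductive hypothesis for one and Theorem~\ref{thm:P}, Lemma~\ref{lem:subquiver}, and Lemma~\ref{lem:3vertices} for the other), and assemble the cover with Lemma~\ref{lem:cover}. Your extra attention to the mutation-invariance reduction, the source-versus-sink adjustment, and the degenerate case $A=\emptyset$ only makes explicit details the paper leaves implicit.
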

\begin{proof}We will induct on the number of vertices.
The theorem holds when $Q$ has one vertex.
Now assume the theorem is true for quivers in the class $\mathcal{P}'_3$ which have $n$ vertices.
Take a quiver in the class $\mathcal{P}'_3$ which has $n$ vertices and consider a triangular extension with a new vertex $i_0$.
Let this quiver be denoted $Q$ so that
\[V(Q) = \{i_0\} \sqcup A \sqcup B,\]
where $i_0$ is a source, there exists an arrow $i_0 \to j$ for all $j \in A$, and $|B| \leq 3$ such that there are no arrows from $i_0$ to any vertex in $B$.

Let $\mathcal{A} = \mathcal{A}(\x,\y,Q)$ and denote the freezings at $\{i_0\}$ and $A$ by $\mathcal{A}^{\d}$ and $\mathcal{A}^{\d\d}$ respectively.
Now~$\mathcal{A}^{\d}$ is defined by a seed with quiver $Q \setminus \{i_0\}$ which is in the class $\mathcal{P}'_3$.
Hence, $\mathcal{A}^{\d}$ is locally acyclic by induction.
This implies $\mathcal{A}^{\d}$ equals its upper cluster algebra and this is a cluster localization by Lemma~\ref{lem:freezelocal}.
The freezing $\mathcal{A}^{\d\d}$ is defined by a seed with a quiver which is the disjoint union of the vertex $i_0$ and $Q|_B$.
Since $Q$ is in $\mathcal{P}_3'$ it admits a reddening sequence by Theorem~\ref{thm:P}.
Thus $Q|_B$ admits a reddening sequence by Lemma~\ref{lem:subquiver}.
It follows by Lemma~\ref{lem:3vertices} that $Q|_B$ is mutation acyclic since $|B| \leq 3$.
Thus $\mathcal{A}^{\d\d}$ agrees with its upper cluster algebra and is a cluster localization by Lemma~\ref{lem:freezelocal}.

We can now apply Lemma~\ref{lem:cover} to conclude that $\big\{\mathcal{A}^{\d}, \mathcal{A}^{\d\d}\big\}$ is a cover of $\mathcal{A}$.
Therefore $\mathcal{A}$ is locally acyclic as desired since $\mathcal{A}^{\d}$ is locally acyclic and $\mathcal{A}^{\d\d}$ is acyclic.
\end{proof}

We believe the condition of connecting to all but at most three vertices to be artificial.
We do offer the following conjecture of a more natural result.

\begin{Conjecture}\label{conj:P'}
If $Q$ is in the class $\mathcal{P}'$, then $\mathcal{A} = \mathcal{U}$ $($over $\mathbb{ZP})$ for any cluster algebra defined by a seed with the quiver $Q$.
\end{Conjecture}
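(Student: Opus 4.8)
The plan is to prove the stronger statement that every quiver in $\mathcal{P}'$ defines a \emph{locally acyclic} cluster algebra; by Muller's theory (via Lemma~\ref{lem:Ucover} applied to an acyclic cover) this yields $\A = \U$. I would argue by induction on the number of vertices, in exactly the spirit of Theorem~\ref{thm:locallyacyclic}, but replacing the appeal to Lemma~\ref{lem:3vertices} (which is what forces the bound $|B| \leq 3$ there) by a structural closure property of $\mathcal{P}'$. The one-vertex quiver is acyclic, giving the base case. Since local acyclicity is an invariant of the whole mutation class, I would first reduce to a convenient representative: tracing the construction of $Q$ back from its last non-mutation step produces a quiver $\widetilde{Q}$, mutation equivalent to $Q$, which is literally a triangular extension of a $\mathcal{P}'$ quiver $Q_0$ on one fewer vertex by a single vertex $v$. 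Up to reversing all arrows we may assume $v$ is a source (the sink case being the evident dual of Lemma~\ref{lem:cover}), so $v \to j$ for all $j$ in some $A \subseteq V(Q_0)$ and there are no arrows from $v$ to $B := V(Q_0) \setminus A$. If $A = \emptyset$ then $v$ is isolated and $\widetilde{Q}$ is a disjoint union, handled directly by induction, so assume $A \neq \emptyset$.

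The next step is the cover argument. Let $\A^{\d}$ be the freezing at $\{v\}$ and $\A^{\d\d}$ the freezing at $A$, as in Lemma~\ref{lem:cover}. The quiver of $\A^{\d}$ is $\widetilde{Q} \setminus \{v\} = Q_0 \in \mathcal{P}'$, which is locally acyclic by the inductive hypothesis; hence $\A^{\d} = \U^{\d}$ and $\A^{\d}$ is a cluster localization by Lemma~\ref{lem:freezelocal}. The quiver of $\A^{\d\d}$ is the disjoint union of the isolated vertex $v$ with $Q_0|_B$. Here is the crux: I would like to invoke induction on $Q_0|_B$, but $Q_0|_B$ is only an \emph{induced subquiver} of the $\mathcal{P}'$ quiver $Q_0$, not manifestly a member of $\mathcal{P}'$. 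Granting for the moment the structural lemma that $\mathcal{P}'$ is closed under taking induced subquivers, we get $Q_0|_B \in \mathcal{P}'$ with $|B| < |V(Q)|$, so $\A^{\d\d}$ is locally acyclic, equals its upper cluster algebra, and is a cluster localization by Lemma~\ref{lem:freezelocal}. Lemma~\ref{lem:cover} then makes $\{\A^{\d}, \A^{\d\d}\}$ a cover of $\A$ by locally acyclic cluster algebras, so $\A$ is locally acyclic, completing the induction.

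Everything therefore rests on proving that $\mathcal{P}'$ is closed under induced subquivers. By an induction on the number of vertices it suffices to treat the single-vertex case: if $R \in \mathcal{P}'$ and $u \in V(R)$, then $R \setminus \{u\} \in \mathcal{P}'$; a general $R|_S$ is then obtained by deleting the vertices of $V(R) \setminus S$ one at a time, each intermediate quiver lying in $\mathcal{P}'$ with strictly fewer vertices. I would establish the single-vertex case by induction on the length of a construction of $R$. If the last operation is a triangular extension by a vertex $w$, then either $u = w$, where $R \setminus \{w\}$ is the preceding $\mathcal{P}'$ quiver, or $u \neq w$, where $R \setminus \{u\}$ is the triangular extension of $R' \setminus \{u\}$ by $w$ and we finish by induction. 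If the last operation is a mutation, $R = \mu_k(R')$, the key observation is the identity $(\mu_k R')|_T = \mu_k\big(R'|_T\big)$ whenever $k \in T$ (restriction to a set containing $k$ is a principal submatrix, which commutes with matrix mutation at $k$). When $u \neq k$ this gives $R \setminus \{u\} = \mu_k\big(R' \setminus \{u\}\big) \in \mathcal{P}'$ by induction and closure under mutation.

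The one genuinely resistant case, which I expect to be the main obstacle, is deleting the \emph{just-mutated} vertex: $R = \mu_k(R')$ with $u = k$. Here the commutation identity is unavailable, and $(\mu_k R') \setminus \{k\}$ equals $R' \setminus \{k\}$ with extra arrows $a \to b$ inserted for every path $a \to k \to b$ of $R'$ (after two-cycle cancellation) --- an operation that is not among those generating $\mathcal{P}'$. Resolving this case is the heart of the matter: one must either show, by a finer analysis of where $k$ enters the construction of $R'$, that $(\mu_k R') \setminus \{k\}$ can nonetheless be rebuilt by mutations and single-vertex extensions, or else abandon the claim of closure and prove local acyclicity of $Q_0|_B$ directly. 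For the latter, the available leverage is that $Q_0|_B$ still admits a reddening sequence (Theorem~\ref{thm:P} together with Lemma~\ref{lem:subquiver}); what is missing in the unbounded setting is precisely an analogue of Lemma~\ref{lem:3vertices} certifying that such a quiver is mutation acyclic, which is exactly the open direction of the conjectured equivalence between reddening sequences and local acyclicity.
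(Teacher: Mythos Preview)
The statement you are treating is \emph{Conjecture~\ref{conj:P'}}, which the paper explicitly leaves open: there is no proof in the paper to compare against. Immediately after stating the conjecture the authors write that ``there are obstacles to extending the result in Theorem~\ref{thm:locallyacyclic} to Conjecture~\ref{conj:P'}. The main problem is that being locally acyclic over~$\bbZ\bbP$ does not pass to induced subquivers in general.'' Your proposal is therefore an attempted proof of an open problem, and it is honest of you to flag the ``genuinely resistant case'' at the end --- but that case is not a technical detail, it is the whole difficulty.

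Concretely: your argument reduces everything to showing that $\mathcal{P}'$ is closed under deleting a single vertex, and you correctly isolate the obstruction to this as the step $R = \mu_k(R')$, $u = k$. You do not resolve it; you only describe two possible strategies. The first (rebuilding $(\mu_k R')\setminus\{k\}$ inside~$\mathcal{P}'$) is exactly the unproved closure statement restated, with no new input. The second (show directly that $Q_0|_B$ is locally acyclic) runs straight into the obstacle the paper names: local acyclicity is not known to pass to induced subquivers, and the only extra leverage you have on $Q_0|_B$ is a reddening sequence, whose upgrade to local acyclicity is the open direction of the very equivalence the paper is probing. In short, your ``plan'' bottoms out at the same wall the authors point to, and nothing in the proposal gets over it.

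A smaller point: your reduction ``trace back to the last non-mutation step'' is fine, but the later dichotomy in your closure argument silently assumes every construction of $R$ in~$\mathcal{P}'$ has a last step that is either a mutation or a one-vertex extension. That is true by definition, so it is harmless --- but it means the entire weight of the proof sits on the $u=k$ case, which you leave open. As written, this is a clear outline of \emph{why} Conjecture~\ref{conj:P'} is hard, not a proof of it.
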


There are obstacles to extending the result in Theorem~\ref{thm:locallyacyclic} to Conjecture~\ref{conj:P'}.
The main problem is that being locally acyclic over $\bbZ\bbP$ does not pass to induced subquivers in general~\cite[Remark~3.11]{MullerADV}.

\begin{Conjecture}\label{conj:P}
If $Q$ is in the class $\mathcal{P}$, then $\mathcal{A} = \mathcal{U}$ over some ground ring $($over $\mathbb{ZP}?)$ for the cluster algebra defined by~$Q$.
\end{Conjecture}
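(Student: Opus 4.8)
The plan is to induct on the inductive definition of the class $\mathcal{P}$, mirroring the proof of Theorem~\ref{thm:P} but tracking the equality $\mathcal{A} = \mathcal{U}$ in place of the existence of a reddening sequence. The base case is the quiver with one vertex, a rank-one cluster algebra for which $\mathcal{A} = \mathcal{U}$ is immediate. The mutation step is free, since both $\mathcal{A}$ and $\mathcal{U}$ depend only on the mutation-equivalence class of the seed, so the property $\mathcal{A} = \mathcal{U}$ is automatically mutation invariant (this is why the class $\mathcal{P}$, being defined up to mutation, is the natural setting). Thus everything reduces to a single triangular-extension step: if $Q_1$ and $Q_2$ satisfy $\mathcal{A} = \mathcal{U}$ over the relevant ground ring and $Q$ is any triangular extension of them, say with every added arrow directed from $V(Q_1)$ to $V(Q_2)$, then $\mathcal{A}(\x,\y,Q) = \mathcal{U}(\x,\y,Q)$. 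This is the $\mathcal{A} = \mathcal{U}$ analogue of Lemma~\ref{lem:directsum}, and it is where all of the difficulty lies.

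The natural way to attack this step over $\mathbb{ZP}$ is through Muller's covering machinery, exactly as in Theorem~\ref{thm:locallyacyclic}. First I would freeze at the whole side $V(Q_1)$: the resulting freezing $\A^{\d}$ has mutable quiver $Q_2$ with coefficients twisted by the frozen variables $y_i^{\d} = y_i \prod_{v \in V(Q_1)} x_v^{Q_{iv}}$, so by the inductive hypothesis applied to $Q_2$ one obtains $\A^{\d} = \U^{\d}$, hence a cluster localization by Lemma~\ref{lem:freezelocal}. Symmetrically, freezing at $V(Q_2)$ yields a localization governed by $Q_1$, which is again a cluster localization by the inductive hypothesis on $Q_1$. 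The remaining task is to verify that these, or a finite family of such localizations, actually form a \emph{cover}: that for every prime $P \subseteq \A$ some $\A_i$ satisfies $\A_i P \subsetneq \A_i$. Once this is known, Lemma~\ref{lem:Ucover} delivers $\A = \U$.

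The main obstacle is precisely this cover-construction, and it is genuine. In Lemma~\ref{lem:cover} the covering property came for free from the mutation relation at a \emph{source} $i$, which forces $1 \in \A x_i + \A\big(\prod_{j \in A} x_j\big)$ and thereby prevents any prime from containing all the relevant cluster variables at once. A quiver in $\mathcal{P}$ built by triangular extension need not have a source or sink---either side may be a mutation-acyclic $3$-cycle, for instance---so no such relation is available, and the argument of Theorem~\ref{thm:locallyacyclic} (which is exactly why that theorem was confined to $\mathcal{P}'_3$) breaks down. Compounding this, local acyclicity and the equality $\A = \U$ over $\mathbb{ZP}$ do \emph{not} pass to induced subquivers in general, so one cannot naively apply the inductive hypothesis to the frozen seeds without additional control.

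For these reasons I expect the ground ring to be the decisive lever, which is the point hedged by the ``(over $\mathbb{ZP}$?)'' in the statement. I would therefore either (a) retain $\mathbb{ZP}$ and search for a uniform way to manufacture a covering family for a triangular extension lacking sources---perhaps by first invoking the reddening sequence from Theorem~\ref{thm:P}, or the underlying scattering-diagram structures of~\cite{GHKK}, to realize a seed in the mutation class where a source together with an outgoing fan appears on one side---or (b) pass to a smaller ground ring $\mathbb{A} \subseteq \mathbb{ZP}$, such as principal coefficients or one of the Goodearl--Yakimov ground rings~\cite{GY}, for which $\A = \U$ behaves functorially under freezing and restriction, so that the triangular-extension step can be carried out by iterated freezing alone with no cover required. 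Route (b) strikes me as the more promising one and would simultaneously pin down the correct ground ring in Conjecture~\ref{conj:P'}; route (a) is closer in spirit to the reddening-sequence results of this paper but collides directly with the absence of sources, which I regard as the central difficulty.
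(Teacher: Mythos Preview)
The statement you are addressing is a \emph{conjecture} in the paper, not a theorem; the paper gives no proof of it. There is therefore nothing to compare your argument against. What the paper does provide, immediately after Conjecture~\ref{conj:P'}, is a single sentence naming the obstruction: local acyclicity over $\mathbb{ZP}$ does not pass to induced subquivers in general, so the inductive/freezing strategy cannot be pushed through as stated. You have correctly rediscovered exactly this obstruction in your third paragraph.

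Your write-up is a thoughtful plan of attack, not a proof, and you are candid about this. The base case and mutation-invariance steps are fine. The genuine gap is the one you yourself isolate: for a general triangular extension there is no source/sink available, so the exchange-relation trick behind Lemma~\ref{lem:cover} does not produce a cover, and without a cover Lemma~\ref{lem:Ucover} cannot be invoked. Neither of your proposed routes~(a) or~(b) is currently a proof: route~(a) would require showing that some seed in the mutation class of a triangular extension has a usable source configuration, which is not known, and route~(b) would require a functoriality statement for $\mathcal{A}=\mathcal{U}$ under freezing over a smaller ground ring that is also not established in the paper or its references. So your proposal should be read as an outline of where the difficulty lies and what one might try, which is consistent with the paper's own assessment that Conjecture~\ref{conj:P} is open.
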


\subsection*{Acknowledgements}

The authors wish to thank the anonymous referees for their feedback which has improved this paper.

\pdfbookmark[1]{References}{ref}
\LastPageEnding

\end{document}